\documentclass{amsart}
\usepackage{amssymb, amsmath}
\usepackage[dvips]{graphicx}
\usepackage{amsfonts}
\usepackage{latexsym}
\usepackage{color}
%%%%%%%%%%%%%%%%%%%%%%%
\newtheorem{theorem}{Theorem}	
\newtheorem{lemma}{Lemma}[section]		
\newtheorem{corollary}{Corollary}		
\newtheorem{proposition}{Proposition}

\begin{document}
\title[Self-dual polytope and self-dual smooth Wulff shape]
{Self-dual polytope and self-dual smooth Wulff shape}
\author{
Huhe Han}
\address{College of Science, Northwest Agriculture and Forestry University, China}
\email{han-huhe@nwafu.edu.cn}
\subjclass[2010]{52A55}
\keywords{Wulff shape, Dual Wulff shape, Self-dual Wulff shape, spherical convex polytope,  
constant width, 
Spherical polar set}% Key word(s)
%\maketitle 
%%%%%%%%%%%%%%%%%%%%%%%%%%%%%%%%%%%%%%   
\begin{abstract}
For any Wulff shape $\mathcal{W}$, 
its dual Wulff shape and spherical Wulff shape $\widetilde{\mathcal{W}}$ 
can be defined naturally.
A self-dual Wulff shape is a Wulff shape equaling its dual Wulff shape exactly.   
In this paper, we show that 
if a spherical convex polytope $P$ is of constant width $\delta$, then $\delta=\pi/2$. 
As an application of this fact, we prove that a polytope Wulff shape is self-dual 
if and only if its spherical Wulff shape is a spherical convex body of constant width.
We also prove that 
a smooth Wulff shape is self-dual if and only if 
for any interior point $P$ of $\widetilde{\mathcal{W}}$ and 
for any point $Q$ of the intersection of the boundary of $\widetilde{\mathcal{W}}$ and the graph of its spherical support function (with respect to $P$),
the image of $Q$ under the spherical blow-up (with respect to $P$) is always a boundary point of 
$\widetilde{\mathcal{W}}$.
\end{abstract}
\maketitle

\section{Introduction}
Let $S^n$ and $\mathbb{R}_+$ be $n$ dimensional unit sphere in $\mathbb{R}^{n+1}$ and the set consisting of positive real numbers respectively. 
Let $\gamma:S^n\to \mathbb{R}_+$ be a continuous function. 
Set $\Gamma_{\gamma,\theta}=\{x\in\mathbb{R}^{n+1}|x\cdot\theta\leq \gamma(\theta)\}$, 
where the dot in the center stands for the dot product of two vectors $x, \theta$ of $\mathbb{R}^{n+1}$.
The {\it Wulff shape}  associated with support function $\gamma$, denoted by $\mathcal{W}_\gamma$, is the following set:
\[
\bigcap_{\theta\in S^n}\Gamma_{\gamma,\theta}.
\]
By definition,  Wulff shape is compact, convex and contains the origin of $\mathbb{R}^{n+1}$ as an interior.
The Wulff shape was first introduced by G.Wulff in \cite{wulff}, is known as a geometric model of a crystal at equilibrium (see for instance \cite {crystalbook,taylor}). 
In \cite{nishimurasakemi2}, an equivalent definition of Wulff shape 
is presented, which is defined as the composition of the following mappings.
\par (1)
Let $Id: \mathbb{R}^{n+1}\to \mathbb{R}^{n+1}\times \{1\}\subset \mathbb{R}^{n+2}$
be the mapping defined by
\[
Id(x)=(x,1).
\]
\par (2)
Let $N=(0,\ldots,0,1)\in \mathbb{R}^{n+2}$ denote the north pole of $S^{n+1}$,
and let $S_{N,+}^{n+1}$ denote the north open hemisphere of $S^{n+1}$, that is,
\[
S_{N,+}^{n+1}=\{Q\in S^{n+1}| N\cdot Q>0\}.
\]
The {\it central projection relative to} $N$, denoted by
$\alpha_N: S_{N,+}^{n+1}\to \mathbb{R}^{n+1}\times \{1\}$, is defined by
\[
\alpha_N\left(P_1, \ldots, P_{n+1}, P_{n+2}\right)
=
\left(\frac{P_1}{P_{n+2}}, \ldots, \frac{P_{n+1}}{P_{n+2}}, 1\right).
\]
We call the spherical convex body $\widetilde{W}_\gamma=\alpha^{-1}(Id(\mathcal{W_\gamma}))$ is {\it the spherical Wulff shape of $\mathcal{W_\gamma}$}.
\par (3)
The {\it spherical blow-up} (with respect to $N$) $\Psi_N:S^{n+1}-\{\pm N\}\to {\color{black}S_{N, +}^{n+1}}$, 
is defined by 
\[
\Psi_N(P)=\frac{1}{\sqrt{1-(N\cdot P)^2}}(N-(N\cdot P)P).    
\]
The mapping $\Psi_N$, which was first introduced in \cite{nishimura}. 
It is obvious that  
\[
|\Psi_N(P)P|=\pi/2\]
for any point $P\in S^{n+1}-\{\pm N\}$.
\par 
(4) 
For any point $P \in S^{n+1}$, let $H(P)$
be the hemisphere centered at $P$,
\[
H(P)=\{Q\in S^{n+1}\mid P\cdot Q\geq 0\},
\]
%For any point $\widetilde{P}\in S^{n+1}$, let $H(\widetilde{P})$ be the closed hemisphere centered at $P$, 
%namely, $H(\widetilde{P})$ is the set consisting of $\widetilde{Q}\in S^{n+1}$ satisfying $\widetilde{P}\cdot \widetilde{Q}\ge 0$, 
where the dot in the center stands for the scalar product of two vectors 
$P, Q\in \mathbb{R}^{n+2}$.     
For any {\color{black}non-empty} subset $\widetilde{W}\subset S^{n+1}$, the {\it spherical polar set of $\widetilde{W}$}, denoted by 
$\widetilde{W}^\circ$, is defined as follows: 
\[
\widetilde{W}^\circ = \bigcap_{P\in \widetilde{W}}H(P).
\]   
\begin{proposition}[\cite{nishimurasakemi2}]\label{sphericalmethod}
Let $\gamma: S^{n}\to \mathbb{R}_+$ be a continuous function.    
Set 
\[
\mbox{\rm graph}(\gamma)=\{(\theta, \gamma(\theta))\in \mathbb{R}^{n+1}-\{0\}\; |\; \theta\in S^n\},
\]
where 
$(\theta, \gamma(\theta))$ is the polar plot expression for a point of $\mathbb{R}^{n+1}-\{0\}$.   
Then, $\mathcal{W}_\gamma$ {\color{black}is} characterized as follows:   
\[
\mathcal{W}_\gamma = 
Id^{-1}\circ \alpha_{{}_{N}}\left(\left(\Psi_N\circ \alpha_{{}_{N}}^{-1}\circ 
Id\left(\mbox{\rm graph}(\gamma)\right)\right)^\circ\right).
\]
\end{proposition}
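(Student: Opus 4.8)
The plan is to reduce the stated identity to a one-direction computation and then recover the full intersection by functoriality. For a fixed $\theta\in S^n$, write $\gamma=\gamma(\theta)$ and follow the point $\gamma\theta\in\mbox{graph}(\gamma)$ through the four maps. Since $\alpha_N$ is a bijection of $S_{N,+}^{n+1}$ onto $\mathbb{R}^{n+1}\times\{1\}$ with inverse $\alpha_N^{-1}(x,1)=\frac{1}{\sqrt{|x|^2+1}}(x,1)$, I would first record that $Id(\gamma\theta)=(\gamma\theta,1)$ is carried by $\alpha_N^{-1}$ to the unit vector $\widehat v_\theta=\frac{1}{\sqrt{\gamma^2+1}}(\gamma\theta,1)\in S_{N,+}^{n+1}$.

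Next I would compute the blow-up. With $N=(0,\ldots,0,1)$ one has $N\cdot\widehat v_\theta=\frac{1}{\sqrt{\gamma^2+1}}$, and substituting into the formula for $\Psi_N$ and simplifying (using $\gamma>0$) gives
\[
P_\theta:=\Psi_N(\widehat v_\theta)=\frac{1}{\sqrt{\gamma^2+1}}\,(-\theta,\gamma).
\]
On the way it is worth checking $P_\theta\cdot\widehat v_\theta=0$, which both confirms $|\Psi_N(P)P|=\pi/2$ in this instance and records the geometric point: the boundary great sphere $\partial H(P_\theta)$ passes through the spherical image $\widehat v_\theta$ of the graph point, exactly as the supporting hyperplane $\{x\cdot\theta=\gamma\}$ passes through $\gamma\theta$.

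The heart of the argument is then the identification of a single hemisphere with a single half-space. For $x\in\mathbb{R}^{n+1}$ let $\widehat u_x=\frac{1}{\sqrt{|x|^2+1}}(x,1)$, so that $Id^{-1}\circ\alpha_N(\widehat u_x)=x$. A direct computation yields
\[
P_\theta\cdot\widehat u_x=\frac{-\,x\cdot\theta+\gamma}{\sqrt{(\gamma^2+1)(|x|^2+1)}},
\]
whose denominator is positive; hence $\widehat u_x\in H(P_\theta)$ if and only if $x\cdot\theta\le\gamma(\theta)$. This gives $Id^{-1}\circ\alpha_N\bigl(H(P_\theta)\cap S_{N,+}^{n+1}\bigr)=\Gamma_{\gamma,\theta}$ for every $\theta$. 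Passing to intersections, since $\widetilde W_\gamma^{\,\circ}=\bigcap_{\theta}H(P_\theta)$ and $Id^{-1}\circ\alpha_N$ is a bijection of $S_{N,+}^{n+1}$ onto $\mathbb{R}^{n+1}$ (so it commutes with intersections), I obtain
\[
Id^{-1}\circ\alpha_N\bigl(\widetilde W_\gamma^{\,\circ}\bigr)=\bigcap_\theta Id^{-1}\circ\alpha_N\bigl(H(P_\theta)\cap S_{N,+}^{n+1}\bigr)=\bigcap_\theta\Gamma_{\gamma,\theta}=\mathcal{W}_\gamma.
\]

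The one step I expect to be the main obstacle is the domain bookkeeping concealed in this last display: $\alpha_N$ is defined only on the open upper hemisphere, so I must justify that applying it to $\widetilde W_\gamma^{\,\circ}$ means intersecting first with $S_{N,+}^{n+1}$, and that nothing relevant to recovering $\mathcal{W}_\gamma$ is lost thereby. Because each $P_\theta$ has positive last coordinate and $\mathcal{W}_\gamma$ is bounded with the origin in its interior (so its spherical image sits strictly inside $S_{N,+}^{n+1}$), this can be arranged; but it is precisely where the otherwise formal computation must be reconciled with the geometry, and where I would spend most of the care.
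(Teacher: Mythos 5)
The paper offers no proof of Proposition \ref{sphericalmethod}; it is imported verbatim from \cite{nishimurasakemi2}, so there is nothing internal to compare your argument against. On its own terms your computation is correct and is essentially the standard derivation: $\Psi_N\circ\alpha_N^{-1}\circ Id$ carries the graph point $\gamma(\theta)\theta$ to $P_\theta=\frac{1}{\sqrt{\gamma^2+1}}(-\theta,\gamma)$, and since $P_\theta\cdot(x,1)$ has the sign of $\gamma(\theta)-x\cdot\theta$, each hemisphere $H(P_\theta)$ pulls back under $Id^{-1}\circ\alpha_N$ exactly to the half-space $\Gamma_{\gamma,\theta}$; intersecting over $\theta$ then gives the claim because $\alpha_N^{-1}\circ Id$ is a bijection onto $S_{N,+}^{n+1}$.

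The one step you leave as a promissory note --- that applying $\alpha_N$ to the polar set loses nothing, i.e.\ that $\bigcap_{\theta}H(P_\theta)\subset S_{N,+}^{n+1}$ --- does need to be closed, but it closes in one line rather than via the compactness considerations you gesture at. A point $(y,t)\in S^{n+1}$ lies in $H(P_\theta)$ if and only if $\theta\cdot y\le\gamma(\theta)\,t$. If $t\le 0$ and $y\ne 0$, the choice $\theta=y/|y|$ yields $|y|\le\gamma(\theta)\,t\le 0$, a contradiction; if $y=0$ then $t=-1$ and the condition reads $0\le-\gamma(\theta)<0$. Hence the polar set lies entirely in the open upper hemisphere, every point of it has positive last coordinate, and your final intersection identity is exact with no boundary or lower-hemisphere points to account for.
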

For any Wulff shape $\mathcal{W}_\gamma$, 
the set
\[
Id^{-1}\circ \alpha_{{}_{N}}\left(\left(\alpha_{{}_{N}}^{-1}\circ 
Id\left(\mathcal{W}_\gamma\right)\right)^\circ\right), 
\] 
is called the {\it dual Wulff shape} of $\mathcal{W}_\gamma$, denoted by $\mathcal{DW}_\gamma$(\cite{nishimurasakemi2}).
A Wulff shape $\mathcal{W}_\gamma$ is called {\it self-dual} if $\mathcal{W}_\gamma=\mathcal{DW}_\gamma$, 
namely, $\mathcal{W}_\gamma$ and its dual Wulff shape are same convex bodies. 
In \cite{hnjmsj},  a self-dual Wulff shape is characterized as  its spherical Wulff shape $\alpha^{-1}(Id(\mathcal{W_\gamma}))$ 
is of constant width $\pi/2$(definitions of constant width and constant diamter spherical convex body see Section 2).
%or equivalently, if and only if its spherical Wulff shape $\alpha^{-1}(Id(\mathcal{W_\gamma}))$ is constant diameter $\pi/2$(\cite{hwam}). 
Applying this fact, in \cite{hwam,Lassak20},  states a characterization of self-dual Wulff shape as constant 
diameter convex body of $\pi/2$.
%\begin{theorem}[{\cite{Lassak20}}]\label{lassakselfdual}
%Each of the following conditions is equivalent to the statement that the Wulff shape $\mathcal{W}_{\gamma}$ is self-dual:
%\begin{enumerate} 
%\item the spherical convex body induced by $\mathcal{W}_{\gamma}$ is of constant width $\frac{\pi}{2}$,
%\item  the spherical convex body induced by $\mathcal{W}_{\gamma}$ is a reduced body of thickness $\frac{\pi}{2}$,
%\item  the spherical convex body induced by $\mathcal{W}_{\gamma}$ is a reduced body of diameter $\frac{\pi}{2}$ ,
%\item the spherical convex body induced by $\mathcal{W}_{\gamma}$ is a body of constant diameter $\frac{\pi}{2}$ .
%\end{enumerate}
%\end{theorem}
%Definition of thickness of spherical convex body see Section 2.
\begin{theorem}[\cite{hwam, Lassak20}]\label{generallassak1}
Let $\mathcal{W}_{\gamma}$ be a Wulff shape. Then the following statements are equivalent: 
\begin{enumerate} 
\item the Wulff shape $\mathcal{W}_{\gamma}$ is self-dual;
\item the spherical Wulff shape of $\mathcal{W}_{\gamma}$ is of constant width $\pi/2$;
\item the spherical Wulff shape of $\mathcal{W}_{\gamma}$ is of constant diameter $\pi/2$.
\end{enumerate}
\end{theorem}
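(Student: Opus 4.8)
The plan is to route all three conditions through the single self-polarity identity $\widetilde{W}_\gamma=\widetilde{W}_\gamma^\circ$ for the spherical Wulff shape. First I would unwind the definition of the dual Wulff shape: since $\widetilde{W}_\gamma=\alpha_N^{-1}\circ Id(\mathcal{W}_\gamma)$, the defining formula collapses to $\mathcal{DW}_\gamma=Id^{-1}\circ\alpha_N(\widetilde{W}_\gamma^\circ)$, and because $Id^{-1}\circ\alpha_N$ is a bijection of $S_{N,+}^{n+1}$ onto $\mathbb{R}^{n+1}\times\{1\}$, the equality $\mathcal{W}_\gamma=\mathcal{DW}_\gamma$ holds if and only if $\widetilde{W}_\gamma=\widetilde{W}_\gamma^\circ$. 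Writing $\rho$ for geodesic distance on $S^{n+1}$ and recording that $P\cdot Q\geq 0$ is equivalent to $\rho(P,Q)\leq\pi/2$, so that $H(P)=\{Q\mid\rho(P,Q)\leq\pi/2\}$, this identifies statement (1) with self-polarity. The characterization of \cite{hnjmsj}, that $\mathcal{W}_\gamma$ is self-dual exactly when $\widetilde{W}_\gamma$ has constant width $\pi/2$, then supplies (1) $\Leftrightarrow$ (2); I would only check that its hypothesis coincides with $\widetilde{W}_\gamma=\widetilde{W}_\gamma^\circ$. It thus remains to prove that (3) is equivalent to self-polarity.

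For the implication from (3), constant diameter $\pi/2$ forces $\mathrm{diam}(\widetilde{W}_\gamma)=\pi/2$, and the distance reformulation turns $\rho(P,Q)\leq\pi/2$ for all $P,Q\in\widetilde{W}_\gamma$ into $\widetilde{W}_\gamma\subseteq\widetilde{W}_\gamma^\circ$. For the reverse inclusion I would invoke diametric completeness: a body of constant diameter $\pi/2$ admits no enlargement keeping the diameter at $\pi/2$, so if $Q\in\widetilde{W}_\gamma^\circ$, meaning $\rho(Q,P)\leq\pi/2$ for every $P\in\widetilde{W}_\gamma$, then adjoining $Q$ would not raise the diameter and hence $Q\in\widetilde{W}_\gamma$. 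This gives $\widetilde{W}_\gamma^\circ\subseteq\widetilde{W}_\gamma$, and together with the first inclusion yields $\widetilde{W}_\gamma=\widetilde{W}_\gamma^\circ$.

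For the converse, self-polarity gives $\mathrm{diam}(\widetilde{W}_\gamma)\leq\pi/2$ from $\widetilde{W}_\gamma\subseteq\widetilde{W}_\gamma^\circ$, and I would produce a diametral partner at each boundary point. Given $x\in\partial\widetilde{W}_\gamma$, choose a supporting hemisphere $H(v)$ at $x$, so that $\widetilde{W}_\gamma\subseteq H(v)$ and $x\cdot v=0$; the inclusion says $v\in\widetilde{W}_\gamma^\circ=\widetilde{W}_\gamma$, while $x\cdot v=0$ says $\rho(x,v)=\pi/2$. Thus every boundary point has a partner in $\widetilde{W}_\gamma$ at distance exactly $\pi/2$, so the diameter equals $\pi/2$ and is attained throughout the boundary, i.e. $\widetilde{W}_\gamma$ has constant diameter $\pi/2$. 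Assembling the equivalences (1) $\Leftrightarrow$ (2) $\Leftrightarrow$ self-polarity $\Leftrightarrow$ (3) proves the theorem.

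I expect the main obstacle to lie in the completeness step of the second paragraph and in matching it with the Section 2 definitions: one must verify that constant diameter $\pi/2$ is exactly diametric completeness at the value $\pi/2$, and that the supporting hemispheres used above exist and have poles returning into $\widetilde{W}_\gamma$ through the bipolar relation $(\widetilde{W}_\gamma^\circ)^\circ=\widetilde{W}_\gamma$ for spherical convex bodies. The value $\pi/2$ is indispensable here, since it is precisely the threshold making $H(P)$ the closed spherical $\pi/2$-ball about $P$ and making $\widetilde{W}_\gamma^\circ$ the set of points within $\pi/2$ of all of $\widetilde{W}_\gamma$; for other values the equivalence of constant width and constant diameter may fail, so I would keep every estimate tied to the identity $H(P)=\{Q\mid\rho(P,Q)\leq\pi/2\}$.
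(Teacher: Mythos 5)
The paper does not actually prove Theorem \ref{generallassak1}: it imports it from \cite{hwam,Lassak20}, with the equivalence $(1)\Leftrightarrow(2)$ taken from \cite{hnjmsj}, so there is no in-text argument to compare against and your proposal must stand on its own. Most of it does. The reduction of $(1)$ to the self-polarity $\widetilde{W}_\gamma=\widetilde{W}_\gamma^\circ$ is a correct unwinding of the definition of $\mathcal{DW}_\gamma$; the appeal to \cite{hnjmsj} for $(1)\Leftrightarrow(2)$ matches what the paper itself does; and the derivation of constant diameter $\pi/2$ from self-polarity (a supporting hemisphere $H(v)$ at a boundary point $x$ gives $v\in\widetilde{W}_\gamma^\circ=\widetilde{W}_\gamma$ with $|xv|=\pi/2$, while $\widetilde{W}_\gamma\subseteq\widetilde{W}_\gamma^\circ$ bounds the diameter by $\pi/2$) is clean and complete.

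The genuine gap is the step you yourself flag: the assertion that a spherical convex body of constant diameter $\pi/2$ is diametrically complete, from which you deduce $\widetilde{W}_\gamma^\circ\subseteq\widetilde{W}_\gamma$. This is not a routine verification; it is essentially the whole content of the hard implication $(3)\Rightarrow(1)$, i.e.\ exactly what \cite{hwam,Lassak20} establish. The naive completeness argument fails at the critical value: if $x\in\widetilde{W}_\gamma^\circ\setminus\widetilde{W}_\gamma$, projecting to the nearest point $z\in\partial\widetilde{W}_\gamma$ and taking a diametral partner $w$ of $z$, the spherical law of cosines gives $\cos|xw|=\cos|xz|\cos|zw|+\sin|xz|\sin|zw|\cos\angle xzw\leq 0$ only because $\cos|zw|=0$, so one obtains $|xw|\geq\pi/2$ and not the strict inequality needed to show that adjoining $x$ raises the diameter. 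A repair that stays inside the paper's toolkit: for $z\in\partial\widetilde{W}_\gamma$ with partner $w$ at distance $\pi/2$, Lemma \ref{diameterorthognal} says the hemisphere orthogonal to $zw$ at $z$ and containing $w$ supports $\widetilde{W}_\gamma$; since $|zw|=\pi/2$ this hemisphere is exactly $H(w)$, so $w\in\widetilde{W}_\gamma$ forces $\widetilde{W}_\gamma^\circ\subseteq H(w)$ while $z\in\partial H(w)$, whence $z\notin\operatorname{int}(\widetilde{W}_\gamma^\circ)$. This yields $\partial\widetilde{W}_\gamma\subseteq\partial\widetilde{W}_\gamma^\circ$, which together with the inclusion $\widetilde{W}_\gamma\subseteq\widetilde{W}_\gamma^\circ$ forces equality of the two convex bodies. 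Either supply such an argument or cite the completeness statement precisely; as written, the crucial step is asserted rather than proved.
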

By Lemma \ref{constantpolytope} below, we know that the condition $\pi/2$ of Theorem \ref{generallassak1} is unnecessary if Wulff shape $\mathcal{W}_{\gamma}$ is a convex polytope (Proposition \ref{generallassak}). 
\begin{lemma}\label{constantpolytope}
Let $\mathcal{P}$ be a spherical convex polytope of constant width $\delta$. Then $\delta=\pi/2$. 
\end{lemma}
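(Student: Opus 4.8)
The plan is to exhibit, inside the boundary of $\mathcal{P}$, a nondegenerate patch of a spherical circle of radius $\delta$ centred at a vertex, and then to note that such a patch can be \emph{flat} — an arc of a great sub-sphere, as the boundary of a polytope must be locally — only when $\delta=\pi/2$. Write $S^{m}$ for the ambient sphere and assume $\mathcal{P}$ is full-dimensional. I would fix a vertex $a$ of $\mathcal{P}$ and use two standard properties of a spherical convex body of constant width $\delta$ (cf.\ Section~2 and \cite{Lassak20}): for every supporting hemisphere $H(v)$ of $\mathcal{P}$ with $a\in\partial H(v)$ (equivalently $a\cdot v=0$), the width is realised along the geodesic leaving $a$ perpendicularly to $\partial H(v)$ into $\mathcal{P}$, and the endpoint of that geodesic of length $\delta$ is again a boundary point of $\mathcal{P}$. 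Since the inward unit tangent at $a$ perpendicular to $\partial H(v)$ is exactly $v$ (because $v\cdot a=0$), this opposite boundary point is
\[
a^{*}(v)=\cos\delta\, a+\sin\delta\, v .
\]

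Next I would let $v$ range over the normal set $N(a)=\{v\in S^{m}\mid \mathcal{P}\subset H(v),\ a\cdot v=0\}$. As $a$ is a vertex of a full-dimensional polytope, $N(a)$ is an $(m-1)$-dimensional region with nonempty interior inside the great sub-sphere $\{v\mid v\cdot a=0\}\cong S^{m-1}$. The map $v\mapsto a^{*}(v)=\cos\delta\,a+\sin\delta\,v$ is (up to the fixed rescaling $\sin\delta$ and translation by $\cos\delta\,a$) the identity of $S^{m-1}$, hence an open embedding; therefore its image
\[
A:=\{a^{*}(v)\mid v\in N(a)\}\subset\partial\mathcal{P}
\]
contains a nonempty open subset of the spherical circle $\Sigma_{\delta}=\{x\in S^{m}\mid x\cdot a=\cos\delta\}$ of radius $\delta$ about $a$. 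This is the crucial nondegeneracy: the perpendicular-geodesic description makes it transparent that the full $(m-1)$-dimensional normal set does \emph{not} collapse to a lower-dimensional set of opposite points.

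Now I would invoke the polytope structure. The boundary $\partial\mathcal{P}$ is a finite union of facets, each lying in a great hypersphere $\{x\cdot b=0\}$, together with lower-dimensional faces; the latter meet the $(m-1)$-dimensional set $A$ in a set of dimension at most $m-2$. Hence some relatively open piece of $A$ lies in the relative interior of a single facet, so a nonempty open subset of $\Sigma_{\delta}$ is contained in one great hypersphere $\{x\cdot b=0\}$. Two hypersurfaces of $S^{m}$ that share a nonempty open piece must coincide, so $\{x\cdot a=\cos\delta\}=\{x\cdot b=0\}$ as subsets of $S^{m}$, which forces $\cos\delta=0$, i.e.\ $\delta=\pi/2$. (In the model case $\mathcal{P}\subset S^{2}$ this last step is immediate: $A$ is a nondegenerate arc lying on an edge's great circle and on the circle $\Sigma_{\delta}$; a great circle meets a circle of radius $\neq\pi/2$ in at most two points, so $\delta=\pi/2$.)

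The step I expect to be the main obstacle is not the geometry of $\Sigma_{\delta}$ but the justification of the first paragraph's input: that for \emph{every} supporting direction $v$ at the vertex $a$ the width is attained along the perpendicular diametral geodesic, with the explicit opposite point $\cos\delta\,a+\sin\delta\,v$ lying on $\partial\mathcal{P}$. This is exactly where the constant-width hypothesis is used, and it must be read off carefully from the Section~2 definition of spherical width (and the width–diameter correspondence of \cite{Lassak20}); once it is in hand, the orthogonality of the diametral geodesics at $a$ guarantees the embedding above and the argument closes.
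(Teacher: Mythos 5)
Your overall strategy---trace the locus of ``opposite'' points over the normal set of a vertex and observe that a nondegenerate piece of the distance sphere $\Sigma_{\delta}$ cannot lie inside a facet's great hypersphere unless $\cos\delta=0$---is appealing, and the final steps (dimension count on the faces, rigidity of hypersurfaces sharing an open piece) are sound. But the input you yourself flag at the end is a genuine gap, not a matter of reading Section~2 carefully. The statement you need, namely that for a body of constant width $\delta$ \emph{every} supporting hemisphere $H(v)$ at \emph{every} boundary point $a$ produces a boundary point $\cos\delta\,a+\sin\delta\,v$, is the spherical analogue of the Euclidean fact that every normal chord of a constant-width body is a diametral chord of length equal to the width. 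The Euclidean proof does not transfer: $\mbox{width}_{H(v)}(\mathcal{P})=\delta$ only yields \emph{some} supporting hemisphere $H(w)$ with $|vw|=\pi-\delta$, and the thickness of the lune $H(v)\cap H(w)$ is the distance between the \emph{centres} of its two bounding hemispheres; two points of $\partial\mathcal{P}$ lying on $\partial H(v)$ and $\partial H(w)$ away from those centres can be at distance strictly smaller than $\pi-|vw|$, so the Euclidean chain ``$|ab|\geq\delta$ and $|ab|\leq\operatorname{diam}=\delta$, hence $b=\cos\delta\,a+\sin\delta\,v$'' breaks at its first link. Neither Lemma~\ref{diameterorthognal} nor the constant-width/constant-diameter equivalences cited in the paper give you this pointwise statement for every direction in the normal cone of a vertex, and your argument collapses without it, because the $(m-1)$-dimensionality of the image $A$ is exactly what that statement is supposed to deliver.

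For comparison, the paper's proof sidesteps all of this with a short duality argument: by Lemma~\ref{strictlyconvex} a spherical convex body of constant width smaller than $\pi/2$ is strictly convex, so a polytope (which has flat facets) must have $\delta\geq\pi/2$; and if $\delta>\pi/2$, then by Lemma~\ref{dualconstantwidth} the polar set $\mathcal{P}^{\circ}$, which is again a polytope, would have constant width $\pi-\delta<\pi/2$, contradicting Lemma~\ref{strictlyconvex} once more. If you want to rescue your approach you would first have to establish the diametral-chord property for spherical constant-width bodies (plausibly via the constant width $\Leftrightarrow$ constant diameter equivalence for $\delta\geq\pi/2$ together with Lemma~\ref{diameterorthognal}), but at that point the duality proof is already both shorter and complete.
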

\begin{proposition}\label{generallassak}
Let Wulff shape $\mathcal{W}_{\gamma}$ be a polytope. Then the following statements are equivalent: 
\begin{enumerate} 
\item the Wulff shape $\mathcal{W}_{\gamma}$ is self-dual;
\item the spherical Wulff shape of $\mathcal{W}_{\gamma}$ is of constant width;
\item the spherical Wulff shape of $\mathcal{W}_{\gamma}$ is of constant diameter.
\end{enumerate}
\end{proposition}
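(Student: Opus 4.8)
The plan is to reduce the proposition to the combination of Theorem \ref{generallassak1} and Lemma \ref{constantpolytope}. Theorem \ref{generallassak1} already records the three equivalences once the constant is pinned to $\pi/2$, while Lemma \ref{constantpolytope} shows that for a spherical convex polytope the constant is forced to equal $\pi/2$. Thus the only genuinely new content is the observation that, when $\mathcal{W}_\gamma$ is a flat convex polytope, its spherical Wulff shape $\widetilde{\mathcal{W}}_\gamma=\alpha_{N}^{-1}(Id(\mathcal{W}_\gamma))$ is itself a spherical convex polytope, so that Lemma \ref{constantpolytope} becomes applicable and the value $\pi/2$ in the hypotheses of Theorem \ref{generallassak1} can be dropped.

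First I would verify that $\widetilde{\mathcal{W}}_\gamma$ is a spherical convex polytope. Since $\mathcal{W}_\gamma$ is a polytope, only finitely many of the half-spaces defining it are active, so $\mathcal{W}_\gamma=\bigcap_{i=1}^{k}\Gamma_{\gamma,\theta_i}$, where $\theta_1,\ldots,\theta_k$ are the outward facet normals. The lift $Id$ carries each bounding hyperplane into an $n$-dimensional affine subspace of $\mathbb{R}^{n+1}\times\{1\}\subset\mathbb{R}^{n+2}$; the cone over it from the origin is a hyperplane through the origin of $\mathbb{R}^{n+2}$, and $\alpha_{N}^{-1}$ sends the image to the portion of the corresponding great hypersphere lying in $S^{n+1}_{N,+}$. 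Correspondingly each half-space $\Gamma_{\gamma,\theta_i}$ is carried to a hemisphere, so $\widetilde{\mathcal{W}}_\gamma$ is a finite intersection of hemispheres, which is exactly a spherical convex polytope.

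With this in hand the width equivalence $(1)\Leftrightarrow(2)$ is immediate. If $\mathcal{W}_\gamma$ is self-dual then by Theorem \ref{generallassak1} its spherical Wulff shape has constant width $\pi/2$, hence in particular constant width, giving $(1)\Rightarrow(2)$. Conversely, if $\widetilde{\mathcal{W}}_\gamma$ has constant width $\delta$, then since it is a spherical convex polytope Lemma \ref{constantpolytope} forces $\delta=\pi/2$, and Theorem \ref{generallassak1} then yields self-duality, giving $(2)\Rightarrow(1)$. The direction $(1)\Rightarrow(3)$ is likewise immediate from Theorem \ref{generallassak1}, since self-duality implies constant diameter $\pi/2$ and hence constant diameter.

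The remaining direction $(3)\Rightarrow(1)$ is where the work lies, and I expect it to be the main obstacle, because Lemma \ref{constantpolytope} is phrased only for constant width. I see two routes. The first is to invoke the equivalence of constant width and constant diameter for spherical convex bodies (with a common constant), reducing $(3)$ to $(2)$ and then applying the argument above; this isolates the subtlety into a cited general fact. The second, more self-contained, route is to prove directly the polytope analogue of Lemma \ref{constantpolytope}, namely that a spherical convex polytope of constant diameter $\delta$ must satisfy $\delta=\pi/2$, by repeating the vertex-and-facet extremal-chord analysis that underlies Lemma \ref{constantpolytope}. In either case, once $\delta=\pi/2$ is secured, Theorem \ref{generallassak1} closes the loop. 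The delicate point throughout is controlling the maximal chords at the non-smooth boundary of a polytopal spherical body, where the diametral pairs through vertices and edges must be matched against the width normalization.
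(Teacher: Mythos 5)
Your reduction follows the same skeleton as the paper's: show that the spherical Wulff shape of a polytopal $\mathcal{W}_\gamma$ is a spherical convex polytope (the paper does this by citing Lemma \ref{lemmapoly}; your facet-by-facet intersection-of-hemispheres argument is an acceptable substitute), then feed Lemma \ref{constantpolytope} into Theorem \ref{generallassak1}. Your treatment of $(1)\Leftrightarrow(2)$ and of $(1)\Rightarrow(3)$ is correct and is exactly what the paper does.

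The gap is $(3)\Rightarrow(1)$, which you flag but do not close, and neither of your two proposed routes is currently a proof. Route (a) appeals to ``the equivalence of constant width and constant diameter with a common constant''; no such general statement is available in the paper, and in the cited literature it is established only for $\delta\ge\pi/2$ (\cite{LM18}) and for smooth bodies when $\delta<\pi/2$ (\cite{Lassak20}) --- a polytope is not smooth, so the case of a polytope of constant diameter $\delta<\pi/2$ is not covered by citation. Route (b) is only a declaration of intent, and its description (``repeating the vertex-and-facet extremal-chord analysis that underlies Lemma \ref{constantpolytope}'') presupposes a proof of that lemma which the paper does not give: Lemma \ref{constantpolytope} is actually proved by polar duality (Lemma \ref{dualconstantwidth}) combined with the strict convexity of bodies of constant width smaller than $\pi/2$ (Lemma \ref{strictlyconvex}), so there is no extremal-chord analysis to repeat. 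To be fair, the paper's own write-up is equally terse here --- it reduces everything to the width statement of Lemma \ref{constantpolytope} --- so you have put your finger on a genuine soft spot. A self-contained way to close it: if $\mathcal{P}$ has constant diameter $\delta$ and $P$ lies in the relative interior of a facet contained in $\partial H(R)$, then the supporting hemisphere at $P$ is the unique $H(R)$, so by Lemma \ref{diameterorthognal} the diametral partner of $P$ must be $Q(P)=\cos\delta\, P+\sin\delta\, R$, and $Q(P)\in\partial\mathcal{P}$ since otherwise the arc could be prolonged past $Q(P)$ inside $\mathcal{P}$; for $\delta\ne\pi/2$ the map $P\mapsto Q(P)$ is injective, so $\partial\mathcal{P}$ would contain an $n$-dimensional piece of the small hypersphere $\{X \mid X\cdot R=\sin\delta\}$, which is impossible for a polytope whose boundary is a finite union of pieces of great hyperspheres; hence $\delta=\pi/2$ and Theorem \ref{generallassak1} applies.
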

We present a local characterization of smooth self-dual Wulff shape as following:
\begin{theorem}\label{main}
Let $\mathcal{W}$ be a smooth Wulff shape. Then the following two statements are equivalent. 
\begin{enumerate}
\item $\mathcal{W}$ is a self-dual Wulff shape.
\item The spherical Wulff shape $\alpha^{-1}(Id(\mathcal{W_\gamma}))$ of $\mathcal{W}$, namely $\widetilde{\mathcal{W}}$,  has the property that 
$\Psi_{M}(P)$ is a point of $\partial\widetilde{\mathcal{W}}$
for any interior point $M$ of $\widetilde{\mathcal{W}}$ and 
any point $P$ of $\partial \widetilde{\mathcal{W}}\cap \Psi_M\left((\partial\widetilde{\mathcal{W}})^{\circ}\right)$.
\end{enumerate}
Here $\partial K$ denotes the boundary of $K\subset S^{n+1}$. 
\end{theorem}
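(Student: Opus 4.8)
The plan is to reduce everything to the spherical polar set. Directly from the definition of the dual Wulff shape, $\mathcal{W}$ is self-dual if and only if $\widetilde{\mathcal{W}}=\widetilde{\mathcal{W}}^{\circ}$, since $Id^{-1}\circ\alpha_{N}$ is a bijection and the spherical Wulff shape of $\mathcal{DW}_\gamma$ is exactly $(\widetilde{\mathcal{W}})^{\circ}$. I would first record two elementary facts. First, $(\partial\widetilde{\mathcal{W}})^{\circ}=\widetilde{\mathcal{W}}^{\circ}$: the inclusion $\widetilde{\mathcal{W}}^{\circ}\subseteq(\partial\widetilde{\mathcal{W}})^{\circ}$ is trivial, and the reverse follows by writing any point of $\widetilde{\mathcal{W}}$ as a normalized nonnegative combination of boundary points. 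Second, a direct computation gives $\Psi_M(P)\cdot P=0$ and $\Psi_M(P)\cdot M=\sqrt{1-(M\cdot P)^2}>0$, and moreover $\Psi_M$ restricts to an involution on the open hemisphere $\{P:M\cdot P>0\}$, i.e.\ $\Psi_M(\Psi_M(P))=P$ there.

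The heart of the argument is to identify the selection set $\partial\widetilde{\mathcal{W}}\cap\Psi_M\left((\partial\widetilde{\mathcal{W}})^{\circ}\right)$. For an interior point $M$ and a point $P\in\partial\widetilde{\mathcal{W}}$, I claim that $P$ lies in this set if and only if $\Psi_M(P)$ equals the pole $\nu(P)$ of the supporting great sphere of $\widetilde{\mathcal{W}}$ at $P$, which is unique by smoothness. Indeed, if $P=\Psi_M(Q)$ with $Q\in(\partial\widetilde{\mathcal{W}})^{\circ}=\widetilde{\mathcal{W}}^{\circ}$, then $M\cdot Q>0$ (as $M\in\mathrm{int}\,\widetilde{\mathcal{W}}\subseteq H(Q)$ forces $M\cdot Q\geq 0$, and equality is impossible for an interior point lying on a supporting great sphere), so by involutivity $\Psi_M(P)=Q\in\widetilde{\mathcal{W}}^{\circ}$; combined with $\Psi_M(P)\cdot P=0$ this says the great sphere $\{R:\Psi_M(P)\cdot R=0\}$ supports $\widetilde{\mathcal{W}}$ at $P$, whence $\Psi_M(P)=\nu(P)$. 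The converse is immediate because $\nu(P)\in\widetilde{\mathcal{W}}^{\circ}$ always. With this dictionary the forward implication $(1)\Rightarrow(2)$ is short: if $\widetilde{\mathcal{W}}=\widetilde{\mathcal{W}}^{\circ}$ then $\partial\widetilde{\mathcal{W}}=\partial\widetilde{\mathcal{W}}^{\circ}$, and for any selected $P$ we get $\Psi_M(P)=\nu(P)\in\partial\widetilde{\mathcal{W}}^{\circ}=\partial\widetilde{\mathcal{W}}$.

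For the converse $(2)\Rightarrow(1)$, I would show that every boundary point can be selected by a suitable interior $M$. Given $P\in\partial\widetilde{\mathcal{W}}$, set $M_t=\cos t\,P+\sin t\,\nu(P)$; since $\nu(P)$ points along the inward spherical normal, $M_t\in\mathrm{int}\,\widetilde{\mathcal{W}}$ for all sufficiently small $t>0$, and a one-line computation gives $\Psi_{M_t}(P)=\nu(P)$. Thus $P$ lies in the selection set for $M=M_t$, so hypothesis (2) forces $\nu(P)\in\partial\widetilde{\mathcal{W}}$. Letting $P$ range over $\partial\widetilde{\mathcal{W}}$ and using that the poles of all supporting great spheres fill out $\partial\widetilde{\mathcal{W}}^{\circ}$, this yields the boundary inclusion $\partial\widetilde{\mathcal{W}}^{\circ}=\nu(\partial\widetilde{\mathcal{W}})\subseteq\partial\widetilde{\mathcal{W}}$.

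It then remains to upgrade this inclusion to the equality $\widetilde{\mathcal{W}}=\widetilde{\mathcal{W}}^{\circ}$. Both $\partial\widetilde{\mathcal{W}}$ and $\partial\widetilde{\mathcal{W}}^{\circ}$ are compact connected topological $n$-manifolds, being boundaries of $(n+1)$-dimensional spherical convex bodies, so by invariance of domain the inclusion map $\partial\widetilde{\mathcal{W}}^{\circ}\hookrightarrow\partial\widetilde{\mathcal{W}}$ has open image; being also closed with $\partial\widetilde{\mathcal{W}}$ connected, it is onto, so $\partial\widetilde{\mathcal{W}}^{\circ}=\partial\widetilde{\mathcal{W}}$, and two convex bodies with the same boundary coincide. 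I expect the main obstacles to be twofold: pinning down the equivalence ``$P$ is selected $\Leftrightarrow\Psi_M(P)=\nu(P)$'', which rests on the involutivity of $\Psi_M$ together with the smoothness-uniqueness of the supporting great sphere; and the concluding topological step, where one must verify carefully that the inclusion is between manifolds of equal dimension so that invariance of domain applies.
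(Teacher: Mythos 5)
Your proof is essentially correct, but for the direction $(2)\Rightarrow(1)$ it takes a genuinely different route from the paper. The paper reduces self-duality to the statement that $\widetilde{\mathcal{W}}$ has constant width $\pi/2$ (via the cited characterization in Theorem \ref{generallassak1}), and then establishes constant width in two steps: a diameter computation (Lemma \ref{leproperty} shows ${\rm diam}(\widetilde{\mathcal{W}})=\pi/2$ under hypothesis (2)) and a width estimate, ruling out width $<\pi/2$ by exhibiting two boundary points at distance $>\pi/2$. You instead bypass the width/diameter machinery entirely: your ``dictionary'' (a point $P$ is selected for $M$ exactly when $\Psi_M(P)$ is the pole $\nu(P)$ of the unique supporting hemisphere) together with the arc $M_t=\cos t\,P+\sin t\,\nu(P)$ shows directly that $\partial\widetilde{\mathcal{W}}^{\circ}=\nu(\partial\widetilde{\mathcal{W}})\subseteq\partial\widetilde{\mathcal{W}}$, and invariance of domain upgrades this to $\widetilde{\mathcal{W}}^{\circ}=\widetilde{\mathcal{W}}$. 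Your approach is self-contained (it does not lean on the constant-width characterization from the literature) at the price of a topological argument; the paper's is longer but stays inside elementary spherical convexity. Note that your assertion ``$M_t\in{\rm int}\,\widetilde{\mathcal{W}}$ for small $t>0$'' is precisely the content of the paper's Lemma \ref{lemmasmooth} and is the one place where smoothness is genuinely used in both arguments, so it deserves a proof rather than a one-line appeal to the inward normal; likewise you should justify $\partial\widetilde{\mathcal{W}}^{\circ}=\nu(\partial\widetilde{\mathcal{W}})$ (one inclusion is Lemma 2.2 of the cited work of Han--Wu, the other is a short compactness argument) and the fact that $\widetilde{\mathcal{W}}^{\circ}$ is again a convex body with nonempty interior, so that its boundary is an $n$-manifold. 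A pleasant bonus of your setup is that the worry about the sign of $M\cdot P$ in the involution $\Psi_M\circ\Psi_M={\rm id}$ evaporates: any $P=\Psi_M(Q)$ automatically satisfies $M\cdot P=\sqrt{1-(M\cdot Q)^2}>0$, so the dictionary is valid exactly as you state it.
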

Geometrically, Theorem \ref{main} means 
a smooth Wulff shape is self-dual if and only if 
for any interior points $P$ of $\widetilde{\mathcal{W}}$ and 
for any point $Q$ of the intersection of the boundary of $\widetilde{\mathcal{W}}$ and the graph of its spherical support function (with respect to $P$),
the image of $Q$ under the spherical blow-up (with respect to $P$) is always a boundary point of 
$\widetilde{\mathcal{W}}$.

\par
This paper is organized as follows. 
 In Section 2, we prepare several properties of width and diameter of spherical convex bodies for proofs of 
 Proposition \ref{generallassak} and Theorem \ref{main}.
The proofs of Proposition \ref{generallassak} and Theorem \ref{main} are given in Section 3, Section 4 respectively.
\section{Width and diameter of spherical convex bodies}
A subset $X\subset S^{n+1}$ is said to be {\it hmispherical} if there exists a
point $P$ of  $S^{n+1}$ such that $H(P)\cap X=\emptyset$.
For any $P,Q\in S^{n+1} (P\neq -Q)$, the arc connecting $P$ and $Q$, denoted by $PQ$,  is defined by
the shorter part of the great circle containing these points, namely,
\[
PQ=\left\{
\frac{(1-t)P+tQ}{||(1-t)P+tQ||}\in S^{n+1} \mid t\in [0,1]
\right\},
\]
where $||\cdot ||$ denotes the standard $(n+2)-$dimensional Euclidean norm.
 The spherical distance $|PQ|$ of $P$ and $Q$ can be given by
$\arccos(P\cdot Q)$.
\par
A hemispherical subset $X\subset S^{n+1}$ is said to be {\it spherical convex}
if the arc $PQ$ is a subset of  $X$ for any $P, Q\in X$.
Let $C\subset S^{n+1}$ be a spherical convex body. 
Let $P$ be a point on the boundary of $C$. 
If the hemisphere $H(Q)$ contains $C$ and $P$ is a point of the intersection $\partial C \cap \partial H(Q)$, then we say $H(Q)$ {\it supports} $C$ at $P$, and $H(Q)$ is a {\it supporting hemisphere} of $C$ at $P$. 
A spherical convex body $C$ is said to be {\it smooth} if  for any boundary point $P$ of $C$,
 exactly one hemisphere supports $C$ at $P$.
If hemispheres $H(P)$ and $H(Q)$ of $S^n$ are different and not opposite ($P\neq -Q$), 
then the intersection $H(P)\cap H(Q)$ is called a {\it lune} of $S^{n+1}$. 
The {\it thickness of lune} $H(P)\cap H(Q)$, denoted by $\Delta (H(P)\cap H(Q))$, is given by 
$\pi-|PQ|$. That is  
\[
\Delta (H(P)\cap H(Q))=\pi-|PQ|. 
\]
If $H(P)$ is a supporting hemisphere of a spherical convex body $C$, 
{\it the width of} $C$ with respect to $H(P)$, denoted by $\mbox{width}_{H(P)} (C)$, is defined by 
\[
\mbox{min}\{\Delta(H(P)\cap H(Q))| H(Q)\ \mbox{supports}\ C\}.
\]
Namely,
$
\mbox{width}_{H(P)} (C) =\mbox{min}\{\Delta(H(P)\cap H(Q))| H(Q)\ \mbox{supports}\ C\}.
$
We say that the spherical convex body $C$ is {\it of constant width},
if all widths of $C$ with respect to any supporting hemispheres are equal. 
Following \cite{Lassak15}, 
we define {\it the thickness} of a convex body $C\subset S^n$, denoted by $\Delta(C)$,  as 
the minimum of $\mbox{width}_{H(P)} (C)$ over all supporting hemispheres $H(P)$ of $C$.
That is 
\[
\Delta(C) = \mbox{min}\{\mbox{width}_K(C)| K\ \mbox{is\ a\ supporting\ hemisphere\ of\ }C\}.
\]
The {\it diameter} of a spherical convex body $C$, denoted by $\mbox{diam}(C)$, is defined by
$\mbox{max}\{|P Q|| P,Q \in C\}$. 
Following \cite{LM18}, we say that the spherical convex body $C$  is of {\it constant diameter} $\delta>0$ , if the diameter of $C$ is $\delta$ , and for every point
$P$ on the boundary of $C$ there exists a point $Q$ of $C$ such that $|P Q| = \delta$. 
More details on width, thickness and diameter of spherical convex bodies, see for instance 
\cite{perimeter20,  Lassak20-1,Lassak20, Lassak22, Musidlak20}.
%For the proof of Proposition , 
\par
To prove Theorem \ref{main}, we require the following Lemmas.
\begin{lemma}[\cite{LM18}]\label{diameterorthognal}
Assume that the diameter of a convex body $C\subset S^n$ is realized for
points $P$ and $Q$. 
The hemisphere $K$ orthogonal to $PQ$ at $P$ and containing $Q\in K$ supports $C$.
\end{lemma}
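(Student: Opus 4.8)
The statement asks us to show that the hemisphere $K$ supports $C$ at $P$, which by definition means two things: $C\subseteq K$, and $P\in\partial C\cap\partial K$. The plan is to first pin down $K$ explicitly and dispose of the easy parts, then reduce the essential content to the single inclusion $C\subseteq K$. Writing $\cos\delta=P\cdot Q$ with $\delta=|PQ|\in(0,\pi)$ (the range is forced because a spherical convex body is hemispherical, so no two of its points are antipodal), the hemisphere orthogonal to the arc $PQ$ at $P$ and lying on the side of $Q$ is $K=H(R)$, where $R=\frac{1}{\sin\delta}\big(Q-\cos\delta\,P\big)$ is the unit tangent vector to $PQ$ at $P$ pointing toward $Q$. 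One checks directly that $R\cdot P=0$ (so $P\in\partial K$) and $R\cdot Q=\sin\delta\ge 0$ (so $Q\in K$), which confirms that $K$ is exactly the hemisphere described in the statement.

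That $P\in\partial C$ is immediate from maximality: if $P$ were interior to $C$, a short push of $P$ directly away from $Q$ would stay inside $C$ yet land strictly farther than $\delta$ from $Q\in C$, contradicting $\mathrm{diam}(C)=\delta$. Since $P\in\partial K$ by construction, the whole lemma comes down to proving $C\subseteq K$.

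For this inclusion I would argue by contradiction through a first-order (gradient) computation. Consider the smooth function $g(Y)=Y\cdot Q$ on $S^n$, whose value decreases as the spherical distance to $Q$ increases. Its gradient in the tangent space $T_PS^n$ is the projection $Q-(P\cdot Q)P=\sin\delta\,R$, so $R$ is precisely the direction of steepest increase of $g$ at $P$, equivalently the direction of steepest decrease of distance to $Q$. Now suppose some $X\in C$ violated $C\subseteq K$, that is $R\cdot X<0$. The arc $PX$ lies in $C$ by convexity and leaves $P$ in the unit tangent direction $w$ proportional to $X-(P\cdot X)P$; since $R\cdot P=0$ we get that $R\cdot w$ has the sign of $R\cdot X$, hence $R\cdot w<0$. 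The directional derivative of $g$ along $PX$ at $P$ is then $\sin\delta\,(R\cdot w)<0$, so $g$ strictly decreases and the distance to $Q$ strictly increases as we move from $P$ into $C$ along $PX$.

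Consequently, for all small $t>0$ the point $Y(t)\in PX\subseteq C$ satisfies $|Y(t)Q|>\delta$, while $Q\in C$ forces $|Y(t)Q|\le\mathrm{diam}(C)=\delta$; this contradiction establishes $C\subseteq K$ and completes the proof. I expect the main obstacle to be the rigorous handling of this first-order step: one must confirm that the distance-to-$Q$ function is differentiable near $P$ (guaranteed by $0<\delta<\pi$, so that $P\neq\pm Q$ and $\sin\delta>0$) and that the one-sided derivative along the arc $PX$ genuinely carries the computed strict sign, rather than vanishing to higher order. Everything else—identifying $K$, the boundary membership of $P$, and assembling the contradiction—is routine once this derivative sign is secured.
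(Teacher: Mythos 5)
Your proof is correct. Note, however, that the paper offers no proof of this lemma at all: it is imported verbatim from the cited reference of Lassak and Musielak, so there is no internal argument to compare against. Your argument is a sound self-contained derivation: the identification $K=H(R)$ with $R=\frac{1}{\sin\delta}(Q-\cos\delta\,P)$ is the right normal vector, the reduction to the single inclusion $C\subseteq K$ is legitimate (indeed $P\in\partial C$ follows automatically once $C\subseteq K$ and $P\in\partial K$), and the first-order step is airtight because the directional derivative $w\cdot Q=\sin\delta\,(R\cdot w)$ is \emph{strictly} negative whenever $R\cdot X<0$, so the sign cannot be lost to higher-order terms; the only hypotheses used, $0<\delta<\pi$ and hemisphericity (so $X\neq -P$ and $w$ is well defined), are available. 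The original source argues the same inclusion by elementary spherical trigonometry (comparing $|XQ|$ with $\delta$ via the spherical law of cosines along the arc $PX$) rather than via a gradient computation, but the two routes are essentially equivalent in content.
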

Let $C$ be a hemispherical subset of $S^{n+1}$. 
The {\it spherical convex hull of} $C$, denoted by $\mbox{s-conv}(C)$, is defined by
\[
\mbox{s-conv}(C) =\left\{
\frac{\sum_{i=1}^k t_iP_i}{\mid\mid\sum_{i=1}^k t_iP_k\mid\mid} |
\sum_{i=1}^kt_i=1, t_i\geq 0, P_1,\dots, P_k\in C, k\in \mathbb{N}
\right\}.
\]
%Let $P_1,\dots, P_k$ be a hemispherical finite subset of $S^{n+1}$. 
We say that 
$\mbox{s-conv}(\{P_1,\dots, P_k\})$ is
{\it the spherical (convex) polytope generated by} $P_1,\dots, P_k$,
 if it has an interior point and
$P_1,\dots, P_k$ is a hemispherical  set.
\begin{lemma}[\cite{nishimurasakemi2}]\label{lemmapoly}
\begin{enumerate}
\item Let $\mathcal{P}\subset S_{N,+}^{n+1}$
be the spherical polytope generated by $P_1,\dots, P_k$.
Then, $\alpha_N (X)$ is the polytope generated by $\alpha_N (P_1),\dots, \alpha_N (P_k)$.
\item Let e $X\subset \mathbb{R}^{n+1}\times \{1\}$ 
be the polytope generated by $(p_1, 1),\dots,(p_k, 1)$. 
Then, $\alpha_N^{-1}(X)$
is the spherical polytope generated by $\alpha_N^{-1}((p1, 1)), . . . , \alpha_N^{-1}((p_k, 1))$.
\end{enumerate}
\end{lemma}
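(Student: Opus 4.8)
\subsection*{Proof proposal}

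The plan is to exploit the fact that $\alpha_N$ is nothing but the central projection from the origin $O$ of $\mathbb{R}^{n+2}$ onto the affine hyperplane $\Pi = \mathbb{R}^{n+1}\times\{1\} = \{x\in\mathbb{R}^{n+2}\mid N\cdot x = 1\}$: indeed, for $Q\in S_{N,+}^{n+1}$ one has $\alpha_N(Q)=Q/(N\cdot Q)$, the unique positive scalar multiple of $Q$ lying on $\Pi$, and this expression depends only on the ray through $Q$. In particular $\alpha_N$ is a homeomorphism of $S_{N,+}^{n+1}$ onto $\Pi$ whose inverse is $x\mapsto x/\|x\|$. Since a central projection from $O$ carries each ray through $O$ to a single point and each $2$-plane through $O$ to a line, it should turn the normalized nonnegative combinations defining $\mbox{s-conv}$ into the genuine convex combinations defining the Euclidean convex hull. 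The whole statement then reduces to tracking how the coefficients transform under this projection.

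For the first assertion, write $s_i = N\cdot P_i>0$, so that $\alpha_N(P_i)=P_i/s_i$ and $N\cdot\alpha_N(P_i)=1$. A typical point of $\mbox{s-conv}(\{P_1,\dots,P_k\})$ has the form $Q=\big(\sum_i t_iP_i\big)/\|\sum_i t_iP_i\|$ with $t_i\ge0$, $\sum_i t_i=1$; note $\sum_i t_iP_i\neq0$ because $N\cdot\sum_i t_iP_i=\sum_i t_is_i>0$, so $Q$ itself lies in $S_{N,+}^{n+1}$ and $\alpha_N(Q)$ is defined. Applying $\alpha_N(Q)=Q/(N\cdot Q)$ and using its invariance along the ray through $Q$, I get
\[
\alpha_N(Q)=\frac{\sum_i t_iP_i}{N\cdot\sum_i t_iP_i}=\frac{\sum_i t_is_i\,\alpha_N(P_i)}{\sum_i t_is_i}=\sum_i u_i\,\alpha_N(P_i),\qquad u_i=\frac{t_is_i}{\sum_j t_js_j}.
\]
The $u_i$ are nonnegative and sum to $1$, so $\alpha_N(Q)$ lies in the Euclidean convex hull of $\alpha_N(P_1),\dots,\alpha_N(P_k)$; this proves $\alpha_N(\mathcal{P})\subseteq\mbox{conv}(\{\alpha_N(P_i)\})$.

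The reverse inclusion, which simultaneously gives the second assertion, is the same computation run backwards. Given $w=\sum_i u_i\,\alpha_N(P_i)$ with $u_i\ge0$, $\sum_i u_i=1$, set $t_i=(u_i/s_i)\big/\sum_j(u_j/s_j)$; then $t_i\ge0$, $\sum_i t_i=1$, and $\sum_i t_iP_i=\big(\sum_j u_j/s_j\big)^{-1}\sum_i u_i\alpha_N(P_i)$ is a positive multiple of $w$, whence $\alpha_N^{-1}(w)=w/\|w\|=\big(\sum_i t_iP_i\big)/\|\sum_i t_iP_i\|\in\mbox{s-conv}(\{P_i\})$. Thus $\alpha_N$ restricts to a bijection between the two convex hulls, which is exactly the content of both statements (for (2) one takes $P_i=\alpha_N^{-1}((p_i,1))$).

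Finally I would dispose of the bookkeeping needed to know these hulls really are polytopes. Because $\alpha_N$ is a homeomorphism between the $(n+1)$-dimensional spaces $S_{N,+}^{n+1}$ and $\Pi$, it carries interior points to interior points, so the full-dimensionality (``has an interior point'') hypothesis is preserved in both directions; and since the image, respectively preimage, of each generator, and hence of the whole hull by the computations above, stays inside the open hemisphere $S_{N,+}^{n+1}$, the generated sets remain hemispherical. Hence $\alpha_N(\mathcal{P})$ is genuinely the polytope generated by $\alpha_N(P_1),\dots,\alpha_N(P_k)$ and, symmetrically, $\alpha_N^{-1}(X)$ is the spherical polytope generated by the $\alpha_N^{-1}((p_i,1))$. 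The only real content is the coefficient reparametrisation $u_i=t_is_i/\sum_j t_js_j$; no step presents a genuine obstacle, the mild care being simply to keep the two normalizations (by $N\cdot(\,\cdot\,)$ versus by $\|\cdot\|$) straight.
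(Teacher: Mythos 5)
The paper does not prove this lemma at all --- it is imported verbatim from \cite{nishimurasakemi2} --- so there is no in-paper argument to compare against. Your proof is correct and is essentially the standard argument one would expect (and the one in the cited source): identify $\alpha_N$ with the central projection $Q\mapsto Q/(N\cdot Q)$, observe that it maps rays to points and hence turns normalized nonnegative combinations into convex combinations via the coefficient change $u_i=t_is_i/\sum_j t_js_j$, and run the computation in both directions; the closing remarks on hemisphericity and preservation of interior points correctly handle the requirement that the generated hulls are actually polytopes in the sense defined in the paper.
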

 %A convex body $R\subset S^n$ is reduced if $\Delta (Z) < \Delta(R)$ for every convex
%body $Z\subset R$ different from $R$.
%\begin{proposition}[\cite{Lassak15}]
%Every smooth reduced body on $S^n$ is of constant width.
%\end{proposition}
%%%%%%%%%%%%%%%%%%%%%%%%%%%%%%%%%%%%%%%%%%%%
%%%%%%%%%%%%%%%%%%%%%%%%%%%%%%%%%%%%%%%%%%%%
\section{ Proof of Proposition \ref{generallassak}:}
\begin{lemma}[\cite{hwam}]\label{dualconstantwidth} 
Let $C$ be a spherical convex body in $S^{n+1}$, and $0<\delta<\pi$. The
following two assertions are equivalent:
\begin{enumerate}
\item $C$ is of constant width $\delta$.
\item $C^\circ$ is of constant width $\pi-\delta$.
\end{enumerate}
\end{lemma}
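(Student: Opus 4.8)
The plan is to reduce the statement to a polar identity relating the width of a spherical convex body to the diameter of its spherical polar set, and then to combine this with the equivalence between constant diameter and constant width for a fixed value. First I would set up the polar dictionary. For $Q\in S^{n+1}$ one has $Q\in C^\circ$ precisely when $C\subset H(Q)$; consequently $H(Q)$ is a supporting hemisphere of $C$ exactly when $Q\in\partial C^\circ$ (interior points of $C^\circ$ give hemispheres that contain $C$ with a gap), and the bipolar identity $C^{\circ\circ}=C$ holds for the proper spherical convex bodies in question (this is the standard property of the spherical polar set used in \cite{nishimurasakemi2}).

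With this dictionary, the key computation is a closed formula for the width. If $H(P)$ supports $C$, i.e.\ $P\in\partial C^\circ$, then directly from the definition of width and of lune thickness,
\[
\mbox{width}_{H(P)}(C)=\min\{\pi-|PQ|\mid H(Q)\ \mbox{supports}\ C\}=\pi-\max_{Q\in\partial C^\circ}|PQ|.
\]
Since $|PQ|=\arccos(P\cdot Q)$ and $P\cdot Q$ is linear in $Q$, its minimum over the spherically convex set $C^\circ$ is attained at an extreme point, hence on $\partial C^\circ$, so the maximum of $|PQ|$ over $\partial C^\circ$ coincides with its maximum over all of $C^\circ$. Therefore
\[
\mbox{width}_{H(P)}(C)=\pi-\max_{Q\in C^\circ}|PQ|\qquad(P\in\partial C^\circ).
\]
Reading off this identity, $C$ is of constant width $\delta$ if and only if every boundary point $P$ of $C^\circ$ satisfies $\max_{Q\in C^\circ}|PQ|=\pi-\delta$; since the diameter of $C^\circ$ is realized on its boundary, this is exactly the statement that $C^\circ$ is of constant diameter $\pi-\delta$. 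I would record this as the equivalence that $C$ is of constant width $\delta$ if and only if $C^\circ$ is of constant diameter $\pi-\delta$.

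To finish I would invoke, for a fixed value $\mu$, the equivalence between constant diameter $\mu$ and constant width $\mu$ of a spherical convex body; the direction actually needed is that constant diameter $\mu$ forces constant width $\mu$. This can be extracted from Lemma \ref{diameterorthognal}: at a diametral pair $P,Q$ the two hemispheres orthogonal to $PQ$ at $P$ and at $Q$ both support the body and bound a lune of thickness $\mu$, and constancy of the diameter makes every supporting hemisphere participate in such a thinnest lune. Granting this, the forward implication of the lemma follows by applying the boxed equivalence to $C$ (giving $C^\circ$ of constant diameter $\pi-\delta$, hence of constant width $\pi-\delta$), and the reverse implication follows by applying the same equivalence to $C^\circ$ and using $C^{\circ\circ}=C$ (giving $C$ of constant diameter $\delta$, hence of constant width $\delta$).

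The hard part will be the final passage between constant diameter and constant width: this is where the genuine spherical-geometric content sits, and it cannot be avoided because the width of a body is governed by its polar while the diameter is intrinsic, so the two notions are linked only through the polarity above (one may instead cite the spherical theory in \cite{LM18}). The technical points I would check most carefully are the linearity argument guaranteeing that the extremal $Q$ lies on $\partial C^\circ$, and the precise statement that $H(Q)$ supports $C$ exactly for $Q\in\partial C^\circ$, since the whole reduction rests on these two facts together with the bipolar identity.
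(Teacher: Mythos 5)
Your opening reduction is sound and, in fact, is the right mechanism: the dictionary ``$H(Q)$ supports $C$ if and only if $Q\in\partial C^\circ$'', the bipolar identity $C^{\circ\circ}=C$, and the resulting formula $\mbox{width}_{H(P)}(C)=\pi-\max_{Q\in C^\circ}|PQ|$ for $P\in\partial C^\circ$ are all correct, and they do yield the equivalence ``$C$ is of constant width $\delta$ if and only if $C^\circ$ is of constant diameter $\pi-\delta$''. (The paper does not prove this lemma; it imports it from \cite{hwam}, where precisely this width--diameter polarity is the working tool.)

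The genuine gap is your last step. You need ``constant diameter $\mu$ implies constant width $\mu$'' for \emph{every} $\mu\in(0,\pi)$, because $\pi-\delta$ and $\delta$ sweep that whole interval; in particular you need it for $\mu<\pi/2$. That case is not a routine consequence of Lemma \ref{diameterorthognal}: the lemma hands you one supporting lune of thickness $\mu$ at a diametral pair, but to get $\mbox{width}_{H(P)}(C)=\mu$ for an \emph{arbitrary} supporting hemisphere $H(P)$ you must exclude a second supporting hemisphere $H(Q')$ with $\pi-|PQ'|<\mu$, and ``constancy of the diameter makes every supporting hemisphere participate in such a thinnest lune'' is exactly the assertion to be proved, not a proof. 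The reference \cite{LM18} only establishes the diameter-to-width implication for $\mu\ge\pi/2$; the range $\mu<\pi/2$ was Lassak's open problem, settled on $S^2$ in \cite{Lassak20} and in general dimension in \cite{hwam} --- where the argument runs in the opposite direction to yours: the present lemma is proved first (directly), and the diameter--width equivalence is then deduced from it. So as written your proof either rests on an unproved hard theorem or, once you cite \cite{hwam} for it, becomes circular. A non-circular repair along your lines would apply your width formula to $C^\circ$ rather than to $C$, i.e.\ write $\mbox{width}_{H(X)}(C^\circ)=\pi-\max_{Y\in C}|XY|$ for $X\in\partial C$, and then prove directly (not via the diameter--width equivalence) that a body of constant width $\delta$ satisfies $\max_{Y\in C}|XY|=\delta$ at every boundary point $X$.
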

\begin{lemma}[\cite{LM18}]\label{strictlyconvex}
Every spherical convex body of constant width smaller than $\pi/2$ on $S^{n+1}$ is strictly convex.
\end{lemma}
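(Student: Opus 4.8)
The plan is to argue by contraposition: assuming $C\subset S^{n+1}$ is \emph{not} strictly convex, I will exhibit two points of $C$ at spherical distance exceeding $\delta$, which contradicts the fact that a body of constant width $\delta\le\pi/2$ has diameter exactly $\delta$. So suppose $\partial C$ contains a nondegenerate arc $e$. Since $e$ is a geodesic arc lying on $\partial C$, there is a supporting hemisphere $H(P)$ whose boundary great sphere $\partial H(P)$ contains $e$; in particular $H(P)$ supports $C$ at every point of $e$, and $P\cdot X=0$ for all $X\in e$.

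Next I would exploit the constant width at $H(P)$. By definition $\mathrm{width}_{H(P)}(C)=\delta$, so there is a supporting hemisphere $H(Q)$ with $\pi-|PQ|=\Delta(H(P)\cap H(Q))=\delta$, whence $C$ lies in the lune $H(P)\cap H(Q)$ of thickness $\delta$. The geometric input I would invoke is that this minimal-width lune carries a diametral chord: contact points $M\in\partial C\cap\partial H(P)$ and $D\in\partial C\cap\partial H(Q)$ for which the arc $MD$ meets $\partial H(P)$ and $\partial H(Q)$ orthogonally and $|MD|=\pi-|PQ|=\delta$. Because $\partial H(P)$ touches $C$ only along $e$ near this contact, the foot $M$ lies on $e$; as $e$ is nondegenerate I may choose a point $A\in e$ with $\phi:=|MA|>0$, and then $MD\perp e$ gives $MD\perp MA$.

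Applying the spherical Pythagorean theorem to the right triangle with legs $MA$ and $MD$ yields $\cos|AD|=\cos|MA|\,\cos|MD|=\cos\phi\,\cos\delta$. Here $\cos\phi<1$ (since $\phi>0$) and $\cos\delta>0$ (since $\delta<\pi/2$), so $\cos|AD|<\cos\delta$, that is $|AD|>\delta$. As $A,D\in C$ this forces $\mathrm{diam}(C)>\delta$, contradicting $\mathrm{diam}(C)=\delta$. This is exactly where the hypothesis $\delta<\pi/2$ is indispensable: at $\delta=\pi/2$ one has $\cos\delta=0$, the identity degenerates to $|AD|=\pi/2$, and no contradiction survives, consistent with the fact that constant width $\pi/2$ bodies (such as lunes) may carry flat arcs.

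The two ingredients used here as black boxes are where I expect the real work to lie. First, that a minimal-width lune enclosing $C$ admits a chord of length equal to its thickness meeting both bounding great spheres orthogonally at points of $\partial C$; second, that constant width $\delta\le\pi/2$ forces $\mathrm{diam}(C)=\delta$. Both are standard facts in the theory of spherical convex bodies and may be quoted from \cite{LM18,Lassak20}; alternatively, the diameter identity can be recovered from Lemma \ref{diameterorthognal} by realizing the diameter at a pair $U,V$, erecting the orthogonal supporting hemispheres there, and comparing the thickness of the resulting lune with the constant width $\delta$. With these in hand the Pythagorean computation closes the argument, and the degeneracy at $\delta=\pi/2$ shows that the hypothesis in Lemma \ref{strictlyconvex} is sharp.
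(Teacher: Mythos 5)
The paper does not actually prove this lemma: it is imported verbatim from \cite{LM18}, so there is no internal proof to compare yours against. Judged on its own terms, your reconstruction has the right skeleton --- it is essentially the lune argument underlying Lassak--Musielak's treatment --- and the spherical--Pythagoras computation at the end, including the observation that the inequality degenerates exactly at $\delta=\pi/2$, is correct. Your first black box (that a lune realizing $\mathrm{width}_{H(P)}(C)$ has the centers of both of its bounding faces on $\partial C$, joined by an arc of length equal to the thickness and orthogonal to both bounding great spheres) is a genuine theorem of Lassak's and is fair to quote.

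Two points need repair. First, your claim that ``the foot $M$ lies on $e$'' is not justified: $\partial H(P)\cap\partial C$ may be much larger than $e$, and nothing forces the face-center $M$ onto the flat arc. Fortunately you do not need it --- any $A\in e$ with $A\neq M$ lies, together with $M$, on the great sphere $\partial H(P)$ (and the two are not antipodal since both lie in $C$), so the arc $MA$ is contained in $\partial H(P)$ and the orthogonality of $MD$ to $\partial H(P)$ at $M$ already gives the right angle at $M$; the computation then goes through unchanged. Second, and more seriously, your fallback derivation of $\mathrm{diam}(C)=\delta$ from Lemma \ref{diameterorthognal} only produces the inequality $\delta\le\mathrm{diam}(C)$: erecting the two orthogonal supporting hemispheres at a diametral pair $U,V$ yields a lune of thickness exactly $|UV|$ containing $C$, whence width $\le$ diameter --- the direction you do not need. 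The inequality you actually use, $\mathrm{diam}(C)\le\delta$, requires a separate argument (for instance, that every point of a lune of thickness $\delta$ lies within spherical distance $\delta$ of each of its bounding great spheres, applied to the minimal-width lune erected at a diametral point), or must itself be quoted from \cite{LM18} --- in which case you should verify that the quoted diameter statement is not derived there from the very strict convexity you are trying to establish, to avoid circularity.
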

%In \cite{Lassak20} states that if a reduced convex body $R\subset S^n$ fulfills $\mbox{diam}(R)\geq \pi/2$, then $R$ is a body of constant width $w$ equal to $\mbox{diam}(R)$, it is also a body of constant diameter $w$. This implies that 
%if a reduced convex polytope $C\subset S^n$ fulfills $\mbox{diam}(C)=w>\pi/2$, then $C$ is a body of constant diameter $w$. This contradicts Proposition \ref{constantpolytope}. 
%On the other hand, if a reduced convex polytope $C\subset S^n$ fulfills $\mbox{diam}(C)=w<\pi/2$,
Since a Wulff shape is self-dual if and only if its spherical Wulff shape is of constant width $\pi/2$ 
(or equivalently, its spherical Wulff shape is of constant diameter $\pi/2$), 
in order to prove Proposition \ref{generallassak}, by Lemma \ref{lemmapoly}, it is sufficient to 
prove that a spherical convex polytope $\mathcal{P}$ of constant width $\delta$, then $\delta=\pi/2$(Lemma \ref{constantpolytope}). 
 By Lemma \ref{strictlyconvex}, 
a spherical convex polytope $\mathcal{P}$ is of constant width $\delta$, then $\delta\geq \pi/2$. 
Suppose that $\delta>\pi/2$. 
By Lemma \ref{dualconstantwidth}, its spherical polar set $\mathcal{P}^\circ$ is a spherical convex body of constant width
$\pi-\delta<\pi/2$. 
On the other hand, 
by assumption, $\mathcal{P}^\circ$ is a spherical polytope.
This contradicts Lemma \ref{strictlyconvex}.
%%%%%%%%%%%%%%%%%%%%%%%%%%%%%%%%%%%%%%%
%%%%%%%%%%%%%%%%%%%%%%%%%%%%%%%%%%%%%%%
\section{Proof of Theorem \ref{main}}
\begin{lemma}\label{lemmasmooth}
Let $\widetilde{\mathcal{W}}$ be a smooth spherical convex body  and 
let $H(Q)$ be a hemisphere supports $\widetilde{\mathcal{W}}$ at $P$. 
Then the intersection of $PQ$ and the interior of $\widetilde{\mathcal{W}}$ is non empty.
\end{lemma}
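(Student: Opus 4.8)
The plan is to prove the stronger local statement that the arc $PQ$ enters $\mathrm{int}\,\widetilde{\mathcal W}$ \emph{immediately}, i.e. that a short initial sub-arc lies in $\mathrm{int}\,\widetilde{\mathcal W}$. First I record the local picture at $P$. Since $H(Q)$ supports $\widetilde{\mathcal W}$ at $P$ we have $Q\cdot P=0$, so $|PQ|=\pi/2$ and the arc may be written with unit speed as $\gamma(s)=\cos(s)\,P+\sin(s)\,Q$, $s\in[0,\pi/2]$, whose initial velocity $\gamma'(0)=Q$ lies in the tangent space $T_PS^{n+1}=P^{\perp}$. Moreover $Q\cdot\gamma(s)=\sin s>0$ for $s\in(0,\pi/2]$, so the open arc lies in $\{R:Q\cdot R>0\}$, the inner side of the supporting great sphere. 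Thus the claim is exactly that the geodesic issuing from $P$ in the inward-normal direction $Q$ meets the interior.

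I would argue by contradiction using smoothness. Let $D\subset P^{\perp}$ be the cone of directions entering the interior, $D=\{u\in P^{\perp}\setminus\{0\}\mid \gamma_u(s)\in\mathrm{int}\,\widetilde{\mathcal W}\ \text{for all sufficiently small }s>0\}$, where $\gamma_u$ is the geodesic from $P$ with $\gamma_u'(0)=u$. The set $D$ is open (the interior is open and geodesics vary continuously with their direction) and is a convex cone: since $\widetilde{\mathcal W}$ is hemispherical it lies in some open hemisphere $S^{n+1}_{B,+}$, and under the central projection relative to $B$ the body becomes a Euclidean convex body $K$ with $P\mapsto p\in\partial K$, geodesics through $P$ become lines through $p$, and $D$ is carried by the (linear) differential onto $\mathrm{pos}(\mathrm{int}\,K-p)$, which is open and convex. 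Also, for every interior point $M$ the open arc $PM\setminus\{P\}$ lies in $\mathrm{int}\,\widetilde{\mathcal W}$, so the tangential direction $u_M=M-(M\cdot P)P$ of $PM$ belongs to $D$. If the lemma failed, then $PQ\cap\mathrm{int}\,\widetilde{\mathcal W}=\emptyset$; in particular $Q\notin D$.

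Now separate $Q$ from the convex cone $D$: since $D$ is a convex cone not containing $Q$, a separation theorem provides $\eta\in P^{\perp}$ with $|\eta|=1$, $\eta\cdot u\le 0$ for all $u\in D$, and $\eta\cdot Q\ge 0$. Applying the first inequality to $u=u_M$ and using $\eta\cdot P=0$ gives $\eta\cdot M\le 0$ for every interior $M$, hence for every $M\in\widetilde{\mathcal W}$ (a convex body is the closure of its interior). Therefore $\widetilde{\mathcal W}\subseteq H(-\eta)$, while $\eta\cdot P=0$ gives $P\in\partial H(-\eta)$; that is, $H(-\eta)$ is a supporting hemisphere of $\widetilde{\mathcal W}$ at $P$. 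By smoothness the supporting hemisphere at $P$ is unique, so $H(-\eta)=H(Q)$ and $\eta=-Q$. But then $\eta\cdot Q=-|Q|^{2}=-1<0$, contradicting $\eta\cdot Q\ge 0$. Hence $Q\in D$, so $\gamma(s)\in\mathrm{int}\,\widetilde{\mathcal W}$ for small $s>0$ and $PQ\cap\mathrm{int}\,\widetilde{\mathcal W}\neq\emptyset$.

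The heart of the argument, and the only place smoothness enters, is the identification of the separating functional $\eta$ with the unique supporting hemisphere at $P$; that uniqueness is precisely what prevents $D$ from being a proper sub-cone of the inner half-space that would exclude the direction $Q$. Its necessity is visible at a corner: at a vertex of a spherical polytope one can choose a supporting hemisphere $H(Q)$ whose bounding great sphere contains an incident edge, and then $PQ$ runs outside the body, so smoothness cannot be dropped. The routine points I am glossing over are the openness and convex-cone property of $D$ (justified through the central projection), together with the standard facts that the relatively open arc from a boundary point to an interior point lies in the interior and that a convex body is the closure of its interior.
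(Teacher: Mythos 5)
Your proof is correct, and it takes a genuinely different route from the paper's. The paper argues by contradiction using the projection $\pi_Q$ from $Q$ onto the equatorial sphere $\partial H(Q)$: assuming $PQ$ misses the interior, $P$ is a boundary point of the projected body $\pi_Q(\widetilde{\mathcal W})\subset\partial H(Q)$, a supporting hemisphere of that projected body in $\partial H(Q)\cong S^n$ is lifted to a hemisphere $\mathcal H^{n+1}$ of $S^{n+1}$ whose bounding great sphere passes through $Q$, and this gives a second supporting hemisphere of $\widetilde{\mathcal W}$ at $P$ distinct from $H(Q)$, contradicting smoothness. You instead work in the tangent space $P^{\perp}$: you form the open convex cone $D$ of directions entering the interior (convexity checked via the gnomonic projection, which linearizes the problem), and if $Q\notin D$ you produce the second supporting hemisphere $H(-\eta)$ by a separation theorem; smoothness forces $\eta=-Q$, contradicting $\eta\cdot Q\ge 0$. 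Both proofs have the same skeleton --- manufacture a supporting hemisphere at $P$ other than $H(Q)$ and invoke uniqueness --- but your separation argument yields the stronger and cleaner local statement that $PQ$ enters the interior \emph{immediately} at $P$, and it isolates exactly where smoothness is needed (your polytope-vertex example showing the hypothesis cannot be dropped is a good check). The paper's construction is more explicitly geometric and produces a concrete second hemisphere, at the cost of leaving a couple of steps (that $P$ is a boundary point of $\pi_Q(\widetilde{\mathcal W})$ and that the lifted hemisphere misses the interior) only sketched. The routine facts you defer --- openness and convexity of $D$, and that the half-open arc from a boundary point to an interior point lies in the interior --- are standard and transfer directly from the Euclidean case under central projection, so there is no gap.
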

\begin{proof}
Suppose that the  intersection of $PQ$ and the interior of $\widetilde{\mathcal{W}}$ is empty.
Without loss of generality, we may assume that $Q=N$ is the north pole of $S^{n+1}$ and $P=(1, 0,\dots, 0)$. 
Let $\mathcal{\pi}_Q: H(Q)\backslash Q\to \partial H(Q)$ be the spherical projection defined by 
\[
\mathcal{\pi}_Q(R)=\frac{\frac{1}{1-R_{n+2}}R+(1-\frac{1}{1-R_{n+2}})Q}
{||\frac{1}{1-R_{n+2}}R+(1-\frac{1}{1-R_{n+2}})Q||},
\]
where $R=(R_1, \dots, R_{n+2})\in H(Q)\backslash Q$. 
Namely, $\mathcal{\pi}_Q(R)$ is an end point of the quarter great circle $Q\mathcal{\pi}_Q(R)$ contains arc $QR$. 
Identifying $\mathbb{R}^{n+1}\times \{0\}$ with $\mathbb{R}^{n+1}$ and $\partial H(Q)$ with $S^n$ of $\mathbb{R}^{n+1}$. 
Then, by assumption, $\mathcal{\pi}_Q(P)=P$ is a boundary point of $\mathcal{\pi}_Q(C)$, 
and $\mathcal{\pi}_Q(C)$ is a spherical convex body of $S^n$. 
Let $\mathcal{H}^{n}\subset S^n$ be the supporting hemisphere of $\mathcal{\pi}_Q(C)$  at $P$ 
and let $\mathcal{H}^{n+1}\subset S^{n+1}$ be the hemisphere contains 
$\mathcal{\pi}_Q^{-1}(\mathcal{H}^n)$. 
Moreover, by definition of $\mathcal{\pi}_Q$, the intersection of 
$\mathcal{H}^{n+1}$ and the interior of $C$ is empty. 
Hence, $\mathcal{H}^{n+1}$ is the supporting hemisphere of $C$ at $P$. 
Since $Q$ is a point of $\mathcal{H}^{n+1}$, 
it follows that $\mathcal{H}^{n+1}$ and $H(Q)$ are different supporting hemispheres 
of $C$ at $P$.
This result contradicts the assumption that $C$ is a smooth spherical convex body.
\end{proof}
%%%%%%%%%%%%%%%%%%%%%%%%%%%%%%%%%%%%%%%%%%%%%
\begin{lemma}\label{leproperty}
Let $\widetilde{\mathcal{W}}$ be a smooth spherical Wulff shape, and has the property that 
for any interior point $M$ of $\widetilde{\mathcal{W}}$ and any point $P$ of $\partial \widetilde{\mathcal{W}}\cap \Psi_M\left(\partial\widetilde{\mathcal{W}}^{\circ}\right)$ 
it follows that $\Psi_{M}(P)$ is a point of $\partial\widetilde{\mathcal{W}}$. 
Then {\rm diam($\widetilde{\mathcal{W}}$)=$\pi/2$}.
\end{lemma}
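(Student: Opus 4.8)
The plan is to examine the two points realizing the diameter and to exploit the fact that, restricted to the hemisphere $H(M)$, the spherical blow-up $\Psi_M$ is an involution. From the defining formula one checks directly that $\Psi_M(\Psi_M(R))=R$ whenever $M\cdot R>0$. Since any interior point $M$ of $\widetilde{\mathcal{W}}$ and any centre $R$ of a supporting hemisphere of $\widetilde{\mathcal{W}}$ (that is, any $R\in\partial\widetilde{\mathcal{W}}^{\circ}$) satisfy $M\cdot R>0$, the hypothesis of the lemma can be recast in the following convenient form: if $R\in\partial\widetilde{\mathcal{W}}^{\circ}$ and $\Psi_M(R)\in\partial\widetilde{\mathcal{W}}$ for some interior point $M$, then $R\in\partial\widetilde{\mathcal{W}}$. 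Indeed, applying the hypothesis to the boundary point $P:=\Psi_M(R)\in\partial\widetilde{\mathcal{W}}\cap\Psi_M(\partial\widetilde{\mathcal{W}}^{\circ})$ returns $\Psi_M(P)=\Psi_M(\Psi_M(R))=R\in\partial\widetilde{\mathcal{W}}$.

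Next I would fix points $P,Q\in\widetilde{\mathcal{W}}$ realizing $\mathrm{diam}(\widetilde{\mathcal{W}})=\delta$ and let $\mathcal{C}$ be the great circle through them. By Lemma \ref{diameterorthognal} the hemisphere orthogonal to $PQ$ at $P$ and containing $Q$ supports $\widetilde{\mathcal{W}}$; writing its centre as $R_P$, we have $R_P\in\mathcal{C}$, $|PR_P|=\pi/2$, $R_P\cdot P=0$, and $R_P\in\partial\widetilde{\mathcal{W}}^{\circ}$. By Lemma \ref{lemmasmooth} the arc $PR_P$ meets the interior of $\widetilde{\mathcal{W}}$, so I may choose an interior point $M^{*}$ of $\widetilde{\mathcal{W}}$ lying strictly between $P$ and $R_P$ on $\mathcal{C}$. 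Expressing points of $\mathcal{C}$ in the orthonormal frame $\{P,R_P\}$, a one-line computation gives $\Psi_{M^{*}}(R_P)=P$. Since $P\in\partial\widetilde{\mathcal{W}}$ and $R_P\in\partial\widetilde{\mathcal{W}}^{\circ}$, the recast property yields $R_P\in\partial\widetilde{\mathcal{W}}$. As $P,R_P\in\widetilde{\mathcal{W}}$ with $|PR_P|=\pi/2$, this already gives $\delta\geq\pi/2$.

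The reverse inequality is where I expect the real work to lie. Suppose $\delta>\pi/2$. Then $R_P$, lying at distance $\pi/2$ from $P$ along $\mathcal{C}$, falls in the relative interior of the arc $PQ$, so $R_P$ is a spherical convex combination of $P$ and $Q$ with strictly positive weights. On the other hand we have just shown $R_P\in\partial\widetilde{\mathcal{W}}$, so by smoothness $R_P$ has a unique supporting hemisphere $H(S)$, with $S\cdot R_P=0$ and $\widetilde{\mathcal{W}}\subseteq H(S)$. Combining $S\cdot R_P=0$ with $S\cdot P\geq 0$, $S\cdot Q\geq 0$ and the positivity of the weights forces $S\cdot P=S\cdot Q=0$; hence $H(S)$ supports $\widetilde{\mathcal{W}}$ at $P$ as well. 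But $S$ is orthogonal to the plane of $\mathcal{C}$ while $R_P\in\mathcal{C}$, so $S\neq R_P$, and $H(S)\neq H(R_P)$ are two distinct supporting hemispheres at $P$, contradicting smoothness. Thus $\delta\leq\pi/2$, and combined with the previous step $\mathrm{diam}(\widetilde{\mathcal{W}})=\pi/2$. The delicate points to get right are the involutivity bookkeeping (ensuring $M\cdot R>0$ throughout, so that $\Psi_M$ really inverts itself) and the verification that a relative-interior point of the diameter chord which also lies on $\partial\widetilde{\mathcal{W}}$ must produce a second supporting hemisphere at $P$; the remaining computations are routine.
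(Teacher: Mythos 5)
Your proof is correct, and its skeleton matches the paper's: both arguments take diameter-realizing points $P,Q$, invoke Lemma \ref{diameterorthognal} to obtain the supporting hemisphere $H(R_P)$ orthogonal to $PQ$ at $P$, use Lemma \ref{lemmasmooth} to place an interior point $M^{*}$ on the arc $PR_P$, and apply the hypothesis to conclude $R_P\in\partial\widetilde{\mathcal{W}}$; your explicit check that $\Psi_{M^{*}}$ is an involution on $\{X\mid M^{*}\cdot X>0\}$ and that $\Psi_{M^{*}}(R_P)=P$ is exactly the computation the paper leaves implicit. You diverge in two places. For the lower bound $\mathrm{diam}(\widetilde{\mathcal{W}})\geq\pi/2$ the paper argues separately, via Proposition \ref{sphericalmethod}, that $\partial\widetilde{\mathcal{W}}\cap\Psi_M\left((\partial\widetilde{\mathcal{W}})^{\circ}\right)$ is non-empty for every interior point $M$, so the hypothesis immediately produces two boundary points at distance $\pi/2$; you instead read the bound off from $P,R_P\in\partial\widetilde{\mathcal{W}}$, which is equally valid. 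More significantly, for the upper bound the paper simply asserts that $R\in\partial\widetilde{\mathcal{W}}$ ``implies $R=Q$'' with no further justification, whereas you actually prove $\mathrm{diam}(\widetilde{\mathcal{W}})\leq\pi/2$: if the diameter exceeded $\pi/2$, then $R_P$ would be a relative-interior point of the chord $PQ$ lying on $\partial\widetilde{\mathcal{W}}$, its supporting hemisphere $H(S)$ would satisfy $S\cdot P=S\cdot Q=0$ by positivity of the weights, and $H(S)\neq H(R_P)$ would give two distinct supporting hemispheres at $P$, contradicting smoothness. That argument is sound, and it is precisely the justification the paper's one-line assertion needs; on this point your version is the more complete of the two.
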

\begin{proof}
By Proposition \ref{sphericalmethod}, we know that if $N=M$ then
\[
\alpha_M\left(\partial \widetilde{\mathcal{W}}\cap \Psi_M\left((\partial\widetilde{\mathcal{W}})^{\circ}\right)\right)
=
\partial \mathcal{W}_\gamma\cap \mbox{\rm graph}(\gamma).
\]
This means 
the intersection 
$\partial \widetilde{\mathcal{W}}\cap \Psi_M\left((\partial\widetilde{\mathcal{W}})^{\circ}\right)$ always non-empty for any interior point $M$ of $\widetilde{\mathcal{W}}$. 
Then, by assumption, it follows that {\rm diam($\widetilde{\mathcal{W}})\geq \pi/2$}.
Let $P, Q$ be two points of $\partial\widetilde{\mathcal{W}}$ 
such that $|PQ|$={\rm diam($\widetilde{\mathcal{W}}$)}. 
By Lemma \ref{diameterorthognal}, the hemisphere $K$ orthogonal to $PQ$ at $P$ and containing $Q\in K$ supports $\widetilde{\mathcal{W}}$.
Set $K=H(R)$.
Since $\widetilde{\mathcal{W}}$ is a subset of $H(R)$, it follows that 
\[
R\in \bigcap_{Q\in\widetilde{\mathcal{W}}}H(Q)=\widetilde{\mathcal{W}}^{\circ}.
\]
By Proposition \ref{sphericalmethod} and Lemma \ref{lemmasmooth}, there exists a point $M$ of the intersection of $PR$ and interior of $\widetilde{\mathcal{W}}$ such that
\[
P \in \partial \widetilde{\mathcal{W}}\cap \Psi_{M}\left((\partial\widetilde{\mathcal{W}})^{\circ}\right).
\]
By assumption, it follows that $\Psi_{M}(P)=R$ is a point of $\partial\widetilde{\mathcal{W}}$. 
This implies $R=Q$.
Thus, we have 
\[
{\rm diam(\widetilde{\mathcal{W}})}=|PQ|=|PR|=|P\Psi_{M}(P)|=\pi/2.
\]
\end{proof}
We are now in position to prove Theorem \ref{main}.
\begin{proof}
First, we proof that the Wulff shape $\mathcal{W}$ has the property (2) of Theorem \ref{main} under the assumption that $\mathcal{W}$ is a self-dual Wulff shape.
Let $M$ be an interior point of $\widetilde{\mathcal{W}}$. Then for any point 
$P$ of $\partial \widetilde{\mathcal{W}}\cap \Psi_M\left((\partial\widetilde{\mathcal{W}})^{\circ}\right)$,
by assumption, it follows that 
\[
\Psi_{M}(P)\in \Psi_{M}\left(\Psi_{M}(\partial\widetilde{\mathcal{W}}^{\circ})\right)=\partial\widetilde{\mathcal{W}}^{\circ}=\partial\widetilde{\mathcal{W}}.
\]
\par
\indent
Next, we proof that the Wulff shape $\mathcal{W}$ is a self-dual Wulff shape under the assumption that 
$\widetilde{\mathcal{W}}$ has the property (2) of Theorem \ref{main}. By Theorem \ref{generallassak1}, 
it is sufficient to prove that $\widetilde{\mathcal{W}}$ is of constant width $\pi/2$. 
First, we prove ${\rm width}_K(\widetilde{\mathcal{W}})\leq \pi/2$ for any supporting hemisphere $K$ of $\widetilde{\mathcal{W}}$.
Since Wulff shape $\mathcal{W}$ has the property (2) of Theorem \ref{main}, 
by Lemma \ref{leproperty}, it follows that
\[
P\in\bigcap_{Q\in \widetilde{\mathcal{W}}}H(Q)=\widetilde{\mathcal{W}}^{\circ}
\]
for any $P$ of $\widetilde{\mathcal{W}}$. 
Set $K=H(Q)$ is a hemisphere supports $\widetilde{\mathcal{W}}$ at $P$.
%Moreover, for any point $P$ of $\partial\widetilde{\mathcal{W}}$ there exists a point $Q$ of 
%$\partial\widetilde{\mathcal{W}}^{\circ}$ such that $P$ is a point of $\partial H(Q)$. 
By duality, it follows that $Q$ is a point of $\partial\widetilde{\mathcal{W}}^\circ$ (Lemma 2.2 of \cite{hwam}).
Let $M$ be a point of the intersection of $PQ$ and the interior of $\widetilde{\mathcal{W}}$
, by Lemma \ref{lemmasmooth} such a point always exists.
Then it follows that 
\[
P\in \partial\widetilde{\mathcal{W}}\cap \Psi_{M}(\partial\widetilde{\mathcal{W}}^{\circ}).
\]
By assumption, this implies
\[
\Psi_{M}(P)=Q\in \partial\widetilde{\mathcal{W}}.
\]
Combining this fact and Lemma \ref{leproperty}, it follows that 
the hemisphere $H(P)$ supports $\widetilde{\mathcal{W}}$ at $Q$. 
This means 
the width of $\widetilde{\mathcal{W}}$ is not greater then thickness of the lune $H(P)\cap H(Q)$. 
Therefore, it follows that
\[
 \tag{$\star$}  %{\rm Width(\widetilde{\mathcal{W}})\leq 
 \Delta(\widetilde{\mathcal{W}})\leq \Delta(H(P)\cap H(Q))=\pi-|PQ|=\pi/2.
\]
We prove the equality $ \Delta(\widetilde{\mathcal{W}})=\pi/2$ of ($\star$) always holds.
To apply proof by contradiction, assume that there exists hemisphere $K=H(Q_1)$ supports   
$\widetilde{\mathcal{W}}$ at $P_1$,
such that the width of $\widetilde{\mathcal{W}}$ determined by $K$ is smaller than $\pi/2$. 
This means there exists a point $Q_{2}$ of $\partial \widetilde{\mathcal{W}}^{\circ}$ such that 
the hemisphere $H(Q_2)$ supports $\widetilde{\mathcal{W}}$ and 
\[
{\rm width}_{K}(\widetilde{\mathcal{W}})=\Delta(H(Q_1)\cap H(Q_2))=\pi-|Q_{1}Q_{2}|<\frac{\pi}{2}.
\]
This implies $|Q_{1}Q_{2}|>\pi/2$.
Set $P_{2}$ be a point of the intersection of $\partial\widetilde{\mathcal{W}}$ and $\partial H(Q_{2})$ 
 Let $M_{1}$ be a point of the intersection of $P_{1}Q_{1}$ and interior of $\widetilde{\mathcal{W}}$,
 by Lemma \ref{lemmasmooth} such a point always exists. 
Then it follows that 
\[
P_{1}\in \partial\widetilde{\mathcal{W}}\cap \Psi_{M_{1}}(\partial\widetilde{\mathcal{W}}^{\circ}).
\] 
By assumption, it follows that
\[
\Psi_{M_{1}}(P_{1})=Q_{1}\in \partial\widetilde{\mathcal{W}}.
\]
In the same way, we have that
\[
\Psi_{M_{2}}(P_{2})=Q_{2}\in \partial\widetilde{\mathcal{W}}.
\]
where $M_{2}$ is a point of the intersection of $P_{2}Q_{2}$ and the interior of $\widetilde{\mathcal{W}}$. 
Therefore, we have the inequality 
\[
\pi/2<|Q_{1}Q_{2}|\leq {\rm diam(\mathcal{W})}.
\]
This contradicts Lemma \ref{leproperty}. 
Therefore, $\widetilde{\mathcal{W}}$ is a spherical convex body of constant width $\pi/2$.
\end{proof}
Since a Wulff shape in $\mathbb{R}^{n+1}$ is strictly convex and its boundary is $C^1$
diffeomorphic to $S^n$ if and only if its dual Wulff shape is strictly convex and the
boundary of it is $C^1$ diffeomorphic to $S^n$ (\cite{hnams}),
combining Theorem \ref{generallassak1} and Theorem \ref{main}, we have the following:
\begin{corollary}
Let $\mathcal{W}$ be a strictly convex Wulff shape and its boundary is $C^1$
diffeomorphic to $S^n$. Then the following statements are equivalent.
\begin{enumerate}
\item $\mathcal{W}$ is a self-dual Wulff shape.
\item The spherical Wulff shape $\widetilde{\mathcal{W}}=\alpha^{-1}(Id(\mathcal{W_\gamma}))$ of $\mathcal{W}$ has the property that 
$\Psi_{M}(P)$ is a point of $\partial\widetilde{\mathcal{W}}$
for any interior point $M$ of $\widetilde{\mathcal{W}}$ and 
any point $P$ of $\partial \widetilde{\mathcal{W}}\cap \Psi_M\left((\partial\widetilde{\mathcal{W}})^{\circ}\right)$.
\item The spherical dual Wulff shape $\widetilde{\mathcal{W}}^\circ$ of $\mathcal{W}$ has the property that 
$\Psi_{M}(P)$ is a point of $\partial\widetilde{\mathcal{W}}^\circ$
for any interior point $M$ of $\widetilde{\mathcal{W}}^\circ$ and 
any point $P$ of $\partial \widetilde{\mathcal{W}}^\circ\cap \Psi_M(\partial\widetilde{\mathcal{W}})$.
\item The spherical Wulff shape $\widetilde{\mathcal{W}}$ is of constant width $\pi/2$.
\item The spherical Wulff shape $\widetilde{\mathcal{W}}$ is of constant diameter $\pi/2$.
\end{enumerate}
\end{corollary}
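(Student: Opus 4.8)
The plan is to read the Corollary as a bundling of Theorem \ref{generallassak1} and Theorem \ref{main}, glued together by the duality-transfer principle of \cite{hnams}. Since I will establish that each of (2)--(5) is equivalent to (1), the asserted five-fold equivalence follows automatically. The equivalences (1) $\Leftrightarrow$ (4) $\Leftrightarrow$ (5) are immediate, because Theorem \ref{generallassak1} holds for every Wulff shape and in particular for $\mathcal{W}$.

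To invoke Theorem \ref{main} I must first check that the hypothesis — $\mathcal{W}$ strictly convex with $C^1$ boundary diffeomorphic to $S^n$ — forces $\mathcal{W}$ to be a smooth Wulff shape in the sense of Section 2, i.e. that $\widetilde{\mathcal{W}}$ admits a unique supporting hemisphere at each of its boundary points. Because $Id$ and the central projection $\alpha_N$ are diffeomorphisms on the relevant open sets, the $C^1$ boundary of $\mathcal{W}$ in $\mathbb{R}^{n+1}$ transfers to a boundary of $\widetilde{\mathcal{W}}$ carrying exactly one supporting hyperplane, hence exactly one supporting hemisphere, at each point; this is precisely the required smoothness. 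Granting it, (1) $\Leftrightarrow$ (2) is Theorem \ref{main} read off directly.

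The one genuinely new step is (1) $\Leftrightarrow$ (3), which I would obtain by applying Theorem \ref{main} to the dual $\mathcal{DW}$ rather than to $\mathcal{W}$. By \cite{hnams}, $\mathcal{DW}$ is again strictly convex with $C^1$ boundary diffeomorphic to $S^n$, so by the previous paragraph it too is smooth and Theorem \ref{main} applies to it. Using the identification $\widetilde{\mathcal{DW}} = \widetilde{\mathcal{W}}^{\circ}$ coming straight from the definition of the dual Wulff shape, together with the involutivity $(\widetilde{\mathcal{W}}^{\circ})^{\circ} = \widetilde{\mathcal{W}}$ of the spherical polar on spherical convex bodies, property (2) of Theorem \ref{main} written for the body $\widetilde{\mathcal{W}}^{\circ}$ is exactly statement (3). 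Hence Theorem \ref{main} yields that $\mathcal{DW}$ is self-dual if and only if (3) holds. Finally, since duality is an involution, $\mathcal{D}(\mathcal{DW}) = \mathcal{W}$, so $\mathcal{W}$ is self-dual if and only if $\mathcal{DW}$ is self-dual, which closes the loop (1) $\Leftrightarrow$ (3).

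I expect the main obstacle to be bookkeeping rather than substance: keeping the polar operations straight across the identification $\widetilde{\mathcal{DW}} = \widetilde{\mathcal{W}}^{\circ}$, and in particular confirming that the set $\Psi_M(\partial\widetilde{\mathcal{W}})$ appearing in (3) is exactly the one demanded by property (2) of Theorem \ref{main} when that theorem is applied to $\widetilde{\mathcal{W}}^{\circ}$. This rests on the identities $(\partial C)^{\circ} = C^{\circ}$ and $(C^{\circ})^{\circ} = C$ for a spherical convex body $C$, and on $\Psi_M$ being an involution on its domain. Verifying that the $C^1$ and strict-convexity hypotheses really do translate into spherical smoothness of both $\widetilde{\mathcal{W}}$ and $\widetilde{\mathcal{W}}^{\circ}$ is the only place that needs (routine) care; the remainder is a formal combination of the quoted results.
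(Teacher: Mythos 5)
Your proposal is correct and follows essentially the same route as the paper, which justifies the corollary in one sentence by citing the duality-transfer result of \cite{hnams} and then combining Theorem \ref{generallassak1} (for the equivalences with (4) and (5)) with Theorem \ref{main} applied to both $\mathcal{W}$ and $\mathcal{DW}$ (for the equivalences with (2) and (3)). Your explicit checks — that the $C^1$ hypothesis yields spherical smoothness, that $\mathcal{DW}$ inherits the hypotheses, and that polarity and $\Psi_M$ behave involutively — are exactly the bookkeeping the paper leaves implicit.
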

{\bf Acknowledgements.}
This work was supported, in partial, by
Natural Science Basic Research Plan in Shaanxi Province of China
(2023-JC-YB-070),

\end{document}